\documentclass[11pt,a4paper]{amsart}
\usepackage[T1]{fontenc}
\usepackage[utf8]{inputenc}
\usepackage{amsmath,amssymb,amsthm}
\usepackage{array}
\usepackage[margin=1in]{geometry}
\usepackage{booktabs}
\usepackage[hidelinks]{hyperref}

\theoremstyle{plain}
\newtheorem{theorem}{Theorem}[section]
\newtheorem{proposition}[theorem]{Proposition}
\newtheorem{lemma}[theorem]{Lemma}
\newtheorem{corollary}[theorem]{Corollary}
\theoremstyle{definition}
\newtheorem{definition}[theorem]{Definition}
\newtheorem{example}[theorem]{Example}
\newtheorem{problem}[theorem]{Problem}
\theoremstyle{remark}
\newtheorem{remark}[theorem]{Remark}

\newcommand{\FF}{\mathbb{F}}
\newcommand{\ZZ}{\mathbb{Z}}
\newcommand{\CC}{\mathbb{C}}
\newcommand{\supp}{\operatorname{supp}}
\newcommand{\tr}{\mathsf{T}}

\begin{document}

\title[Tensor constructions for Euler magic matrices]{Tensor constructions for
Euler magic matrices and proper examples of orders 9, 27, 81 and 243}

\author{Sanjit Singh Mehat}

\date{August 16, 2026}

\subjclass[2020]{Primary 05B20; Secondary 11D09, 15B36, 68V20}
\keywords{Euler magic matrix, magic square of squares, Kronecker product,
orthogonal design, formal verification}

\begin{abstract}
An \emph{Euler magic matrix} is an integer matrix $M$ satisfying
$MM^{\tr}=\gamma I$ together with two diagonal square-sum conditions; it is
\emph{proper} when its entry squares are pairwise distinct. Müller proved that
Euler magic matrices exist in every order other than $3$, and that no Euler
magic matrix of order $3$ exists at all, while proper examples are considerably
more restrictive. We describe a tensor construction whose factors are only
required to satisfy the orthogonality equation $AA^{\tr}=\gamma I$: for a
linear reindexing $L$ of the row index group $\FF_3^k$ obeying an explicit
support condition, the reindexed Kronecker product of $k$ such $3\times3$
factors satisfies the full Euler magic conditions in order $3^k$. Choosing
factors whose entry squares have pairwise distinct products, we obtain proper
Euler magic matrices of orders $9$, $27$, $81$ and $243$. The construction
therefore produces proper examples in powers of three even though order three
admits no Euler magic matrix, and the passage from the factors to the product
is exactly where the Euler conditions are created rather than inherited. We
give an explicit support condition on $L$ and show that it characterises the
linear reindexings forcing the two Euler diagonal identities for \emph{every}
tuple of semi-magic factor arrays; over $\FF_p$ with two factors, such a
reindexing exists only when $p\le3$. The four existence results are formalised
in Lean~4, as are the two instances of the construction used to obtain them;
the order-$243$ certificate is taken from the archived development and was not
rebuilt in preparing this paper, although its witness was reproduced here by
exact integer arithmetic. Further proper examples of orders $729$ and $2187$ are
verified by exact integer computation only.
\end{abstract}

\maketitle

\section{Introduction and prior work}

Throughout, matrices have integer entries and $n\ge1$. We use the definition of
Müller \cite{Muller2026}.

\begin{definition}[{\cite[Definition~1.1]{Muller2026}}]\label{def:euler}
A matrix $M\in\ZZ^{n\times n}$ is an \emph{Euler magic matrix} with constant
$\gamma\in\ZZ$, $\gamma\ne0$, if
\begin{align}
MM^{\tr} &= \gamma I, \label{eq:orth}\\
\sum_{i=0}^{n-1} M_{i,i}^{2} &= \gamma, \label{eq:main}\\
\sum_{i=0}^{n-1} M_{i,\,n-1-i}^{2} &= \gamma. \label{eq:anti}
\end{align}
It is \emph{proper} if the $n^2$ entry squares $M_{i,j}^2$ are pairwise
distinct.
\end{definition}

Condition \eqref{eq:orth} says that the rows are pairwise orthogonal and each
has square-sum $\gamma$; \eqref{eq:main} and \eqref{eq:anti} impose the same
square-sum on the two diagonals. An Euler magic matrix is thus a square array
of integers whose rows, columns and both diagonals have equal square-sums,
together with the orthogonality of distinct rows. Properness is the natural
non-degeneracy requirement, and is what makes the problem an analogue of the
classical question of magic squares of squares.

\subsection*{The known landscape}

The state of the art we take as our starting point is the following.

\begin{itemize}
\item Euler gave a proper example of order $4$; it is the classical
  four-square identity matrix and is reproduced in \cite[\S1]{Muller2026}.
\item Müller \cite[Theorem~1.2]{Muller2026} proved that \emph{no} Euler magic
  matrix of order $3$ exists --- not merely that no proper one does. He also
  exhibited a proper example of order $8$.
\item Müller \cite[Theorem~4.1]{Muller2026} proved that an Euler magic matrix
  exists in \emph{every} order $n\ne3$. Ordinary existence is therefore settled,
  including at every power of three; the residual difficulty is entirely in
  properness.
\item Kominers \cite{Kominers2026} gave a proper example of order $5$ and
  observed that beyond order $5$ the existence of proper Euler magic matrices
  in odd orders was open, suggesting a product approach without carrying it
  out.
\end{itemize}

A proper Euler magic matrix of order $6$ is obtained by different means in
separate, contemporaneous work of the author; it is not used anywhere below and
is not part of the construction of this paper.

\subsection*{Adjacent product constructions}\label{sec:adjacent}

Kronecker products are a standard tool for building magic squares, and there is
a substantial literature on \emph{compound} magic squares --- including at
orders $3^{\ell}$, which is exactly the range treated here. It is therefore
worth saying precisely what separates that literature from
Definition~\ref{def:euler}.

The distinguishing condition is \eqref{eq:orth}. A magic square requires equal
\emph{line sums}; a multimagic square requires equal line sums of the first
several powers of the entries; a magic square of squares requires equal line
sums of the entry squares. None of these implies that distinct rows are
orthogonal, which is what \eqref{eq:orth} demands, and orthogonality is not
preserved by the operations (relabelling, affine transformation of entries)
under which those classes are closed. Conversely \eqref{eq:orth} does not
require the entries themselves to have equal line sums. The two notions are
independent, and a construction producing one says nothing about the other.
Concretely:

\begin{itemize}
\item Loly and Cameron \cite{LolyCameron2020} extend Frierson's 1907
  parametrisation of compound magic squares to all orders $3^{\ell}$, and
  Rogers, Cameron and Loly \cite{RogersCameronLoly2017} compound
  \emph{doubly affine} matrices --- arrays whose defining property is, in their
  words, ``only identical row and column sums (often called semi-magic)''. That
  is precisely the hypothesis of our Lemma~\ref{lem:semimagic}, but it is applied
  there to the entries, and the conclusion drawn is about line sums and spectra
  of the compound, not about $MM^{\tr}$. Neither paper imposes or derives
  \eqref{eq:orth}, and neither addresses the two diagonal square-sum conditions
  \eqref{eq:main}--\eqref{eq:anti} or properness in the sense of
  Definition~\ref{def:euler}.
\item Nordgren \cite{Nordgren2024} constructs compound magic square matrices
  from Lucas sequences, again at composite orders via Kronecker products; the
  invariants studied are ranks, eigenvalues and line sums.
\item Zhang, Chen and Li \cite{ZhangChenLi2015} give product constructions for
  multimagic squares using diagonal Latin squares and Kronecker products, and
  similar order-$9$ constructions arise from pairs of orthogonal Latin squares of
  order $3$. Multimagic conditions constrain sums of powers along lines; they do
  not give row orthogonality. Kominers cites this same work
  \cite[\S5]{Kominers2026} when noting that keeping a Kronecker product proper
  requires care.
\item Rome and Yamagishi \cite{RomeYamagishi2025} prove by the circle method
  that magic squares of powers exist in every order $n\ge4$. As Kominers observes
  \cite[\S1]{Kominers2026}, a magic square of squares need not arise from an
  orthogonal matrix, so this does not produce Euler magic matrices;
  \eqref{eq:orth} is the extra rigidity.
\item Pirsic \cite{Pirsic2019,Pirsic2020} parametrises $8\times8$ magic squares
  of squares through octonionic multiplication, and relates magic squares of
  squares to division algebras. This is the closest in spirit --- our
  Lemma~\ref{lem:quaternion} is the quaternionic analogue at order $3$ --- but the
  orders reached are $4$ and $8$, governed by the associative and alternative
  division algebras, and no power of three arises.
\end{itemize}

We have not located a source that implies Theorem~\ref{thm:criterion} or the
existence of proper Euler magic matrices in orders $9$, $27$, $81$ or $243$. We
state that as the outcome of a search, not as a proof of novelty: absence of a
located source is not evidence of absence, and the literature on magic squares
is old, large and scattered across several communities.

We stress the third point: \emph{we do not claim ordinary existence at powers of
three}, which is Müller's Theorem~4.1. Our focus here is properness and the
mechanism by which the Euler conditions are produced.

\subsection*{The problem}

Since no Euler magic matrix of order $3$ exists, a product construction cannot
build order $9$ --- or any order $3^k$ --- out of Euler magic factors of order
$3$: there are none. Thus a product construction requiring order-$3$ Euler
magic factors cannot even begin. The question we address is whether the
obstruction in order $3$ propagates:

\begin{quote}
Do proper Euler magic matrices exist in orders that are powers of three, given
that order three admits no Euler magic matrix at all?
\end{quote}

\subsection*{Results}

The answer is affirmative in the first four cases.

\begin{quote}
\emph{Proper Euler magic matrices exist in orders $9$, $27$, $81$ and $243$.}
\end{quote}

The mechanism is that the factors need not be Euler magic. Only the
orthogonality equation $AA^{\tr}=\gamma I$ is imposed on them; a particular
linear reindexing of the row index group of the tensor product then supplies
the two diagonal conditions, which the plain Kronecker product does not satisfy.
Order $9$ is the smallest power of three settled by the $\FF_3$ construction used
here, and is where the phenomenon is clearest: it is a power of three, its only
nontrivial factorisation is $3\times3$, and order $3$ admits no Euler magic
matrix, so the order-$9$ example cannot be a product of Euler magic matrices.

Section~\ref{sec:construction} isolates the reindexings that work. The condition
is a support condition on the transpose of the reindexing map
(Theorem~\ref{thm:criterion}), proved by a character computation, and it is not
merely sufficient: it characterises those $L$ that force the identities for
every tuple of semi-magic factor arrays (Theorem~\ref{thm:converse}). For a
two-factor product over $\FF_p$ such an $L$ exists only when $p\le3$
(Corollary~\ref{cor:whythree}), which is what singles out $p=3$ for the base
construction used here.

Sections~\ref{sec:nine} and~\ref{sec:tower} give the witnesses,
Section~\ref{sec:proper} treats properness, Section~\ref{sec:computational}
records exactly-computed extensions to orders $729$ and $2187$,
Section~\ref{sec:verification} states the verification status of each claim,
and Section~\ref{sec:open} states the open problems the construction generates.

\section{Euler magic matrices and orthogonality factors}\label{sec:prelim}

\begin{definition}\label{def:factor}
An \emph{orthogonality factor} with constant $\gamma$ is a matrix
$A\in\ZZ^{n\times n}$ with $AA^{\tr}=\gamma I$ and $\gamma\ne0$. We write
$a$ for its \emph{entry-square array}, $a(x,y)=A_{x,y}^2$.
\end{definition}

An Euler magic matrix is an orthogonality factor satisfying two further
conditions; the converse fails, and the failure is what the construction
exploits.

\begin{lemma}\label{lem:semimagic}
Let $A\in\ZZ^{n\times n}$ be an orthogonality factor with constant $\gamma$.
Then every row sum and every column sum of the entry-square array $a$ equals
$\gamma$:
\[
\sum_{y} a(x,y)=\gamma \quad (0\le x<n),\qquad
\sum_{x} a(x,y)=\gamma \quad (0\le y<n).
\]
\end{lemma}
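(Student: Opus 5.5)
Row sums come straight from the diagonal of $AA^{\tr}=\gamma I$: the $(x,x)$ entry is $\sum_y A_{x,y}^2=\sum_y a(x,y)$, and it equals $\gamma$. So every row sum of $a$ is $\gamma$.

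For column sums I would show that $A^{\tr}A=\gamma I$ as well, and then read off the diagonal $(y,y)$ entry, $\sum_x A_{x,y}^2=\gamma$. The only step that needs an argument is passing from $AA^{\tr}=\gamma I$ to $A^{\tr}A=\gamma I$. Since $\gamma\ne0$, the matrix $B=\gamma^{-1}A^{\tr}$, taken over $\mathbb{Q}$, satisfies $AB=I$. For square matrices over a field a one-sided inverse is two-sided, so $BA=I$, which gives $A^{\tr}A=\gamma I$. Integrality plays no role here, and working over $\mathbb{Q}$ is harmless because the conclusion is an identity between integers.

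If one wants to avoid quoting the one-sided-inverse fact, there is a shorter route for the sums alone. Summing all row sums gives $\sum_{x,y}a(x,y)=n\gamma$, but that by itself does not force each column sum to equal $\gamma$, so the matrix identity $A^{\tr}A=\gamma I$ is genuinely the cleanest path. A determinant argument also works: $\det(A)^2=\gamma^n\ne0$, so $A$ is invertible, hence $A^{-1}=\gamma^{-1}A^{\tr}$ and $A^{\tr}A=\gamma I$. I would present this version, since it is self-contained.

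There is no real obstacle. The one point to state explicitly is that $\gamma\ne0$ (built into Definition~\ref{def:factor}) is what makes $A$ invertible, and hence what lets column orthogonality follow from row orthogonality.
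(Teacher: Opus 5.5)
Your proof is correct and follows the paper's argument. Row sums come from the diagonal of $AA^{\tr}=\gamma I$, and column sums from $A^{\tr}A=\gamma I$, which holds because $\gamma\ne0$ makes $A$ invertible over $\mathbb{Q}$ with $A^{-1}=\gamma^{-1}A^{\tr}$; your $(\det A)^2=\gamma^n\ne0$ just spells out the invertibility step the paper asserts.
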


\begin{proof}
The diagonal entries of $AA^{\tr}=\gamma I$ give
$\sum_y A_{x,y}^2=\gamma$ for each $x$, which is the row statement. For the
columns, note that $AA^{\tr}=\gamma I$ with $\gamma\ne0$ forces $A$ to be
invertible over $\mathbb{Q}$ with $A^{-1}=\gamma^{-1}A^{\tr}$, whence
$A^{\tr}A=\gamma I$ as well; its diagonal entries give
$\sum_x A_{x,y}^2=\gamma$.
\end{proof}

Lemma~\ref{lem:semimagic} is classical and goes back to Euler; it is stated in
this form, with the same proof, by Kominers \cite[\S1]{Kominers2026}, who
records that once $MM^{\tr}=\gamma I$ holds ``only the two diagonal sums and the
distinctness of the entry-squares remain''. We include it because the semi-magic
property of the entry-square array is the exact hypothesis of the results in
Section~\ref{sec:construction}.

We call an array with all row sums and all column sums equal to $\gamma$
\emph{semi-magic with line sum $\gamma$}. Lemma~\ref{lem:semimagic} says that
the entry-square array of an orthogonality factor is semi-magic; note that
nothing is asserted about its diagonals, and in general the diagonal sums differ
from $\gamma$. Conditions \eqref{eq:main} and \eqref{eq:anti} are exactly the
statement that the entry-square array is moreover \emph{magic}.

\begin{example}\label{ex:factors}
The two matrices
\[
A=\begin{pmatrix}28&47&16\\44&-32&17\\23&4&-52\end{pmatrix},
\qquad
B=\begin{pmatrix}75&54&22\\50&-78&21\\30&-5&-90\end{pmatrix}
\]
are orthogonality factors with constants $\gamma_A=3249=57^2$ and
$\gamma_B=9025=95^2$, and each has nine pairwise distinct entry squares.
Neither is an Euler magic matrix: their main-diagonal square-sums are
\[
28^2+(-32)^2+(-52)^2=4512\ne3249,\qquad
75^2+(-78)^2+(-90)^2=19809\ne9025 .
\]
Indeed no $3\times3$ matrix whatever is an Euler magic matrix, by
\cite[Theorem~1.2]{Muller2026}. Orthogonality factors of order $3$, by
contrast, are plentiful, as the next lemma records.
\end{example}

\begin{lemma}\label{lem:quaternion}
For integers $w,x,y,z$ put $N=w^2+x^2+y^2+z^2$ and
\begin{equation}\label{eq:quat}
R(w,x,y,z)=\begin{pmatrix}
w^2{+}x^2{-}y^2{-}z^2 & 2(xy-wz) & 2(xz+wy)\\
2(xy+wz) & w^2{-}x^2{+}y^2{-}z^2 & 2(yz-wx)\\
2(xz-wy) & 2(yz+wx) & w^2{-}x^2{-}y^2{+}z^2
\end{pmatrix}.
\end{equation}
Then $R R^{\tr}=N^2 I$, so $R$ is an orthogonality factor with constant $N^2$
whenever $N\ne0$. Conversely, every orthogonality factor $A\in\ZZ^{3\times3}$
has square constant: $\gamma$ is a perfect square.
\end{lemma}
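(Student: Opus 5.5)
The statement has two parts, and I will treat them separately.

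For the identity $RR^{\tr}=N^2I$, I would use the quaternion map rather than expanding nine entries. Put $q=w+xi+yj+zk$. Then $R(w,x,y,z)$ is the matrix of $v\mapsto qv\bar q$ on pure quaternions $v=ai+bj+ck$. Since $|qv\bar q|=N|v|$, this linear map multiplies Euclidean length by $N$. Hence $R^{\tr}R=N^2I$ as a polynomial identity in $\ZZ[w,x,y,z]$.

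Because $R$ is square, $R^{\tr}R=N^2I$ with $N\neq 0$ gives $RR^{\tr}=N^2I$. If $N=0$ then $R=0$ and the identity holds trivially. As a fallback that avoids quaternions, one can check directly that each row has square-sum $N^2$ and that distinct rows are orthogonal. For example, the first row's square-sum is $(w^2+x^2-y^2-z^2)^2+4(xy-wz)^2+4(xz+wy)^2$, which expands to $N^2$, and the other entries are similar routine polynomial identities.

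For the converse, let $A\in\ZZ^{3\times3}$ satisfy $AA^{\tr}=\gamma I$ with $\gamma\ne0$. Taking determinants gives $\det(A)^2=\gamma^3$. From this, $\gamma^3$ is a perfect square, and $\gamma=\gamma^3/\gamma^2$ is the square of the rational number $\det A/\gamma$.

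An integer that is the square of a rational is the square of an integer, since a rational root of the monic polynomial $t^2-\gamma$ must be an integer. Also $\gamma=\sum_y A_{0,y}^2>0$, so $\gamma$ is a positive perfect square, with $\sqrt\gamma=|\det A|/\gamma$.

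I expect no real obstacle here. The only care needed is in the first part: either one must check that the matrix in \eqref{eq:quat} really is the conjugation matrix with these sign conventions, or one must carry out the row-orthogonality expansions carefully.
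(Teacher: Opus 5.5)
Your proposal is correct and follows essentially the same route as the paper: the identity comes from the quaternion conjugation formula $v\mapsto qv\bar q$, which you justify via multiplicativity of the norm where the paper just says it is checked directly, and the converse comes from $(\det A)^2=\gamma^3$. The only difference is in the last step of the converse, where you write $\gamma=(\det A/\gamma)^2$ and use that a rational square root of an integer is an integer, whereas the paper argues that every exponent $e$ in the prime factorisation of $\gamma$ has $3e$ even.
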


The converse half is the case $n=3$ of a lemma of Müller
\cite[Lemma~2.1]{Muller2026}, who proves it for every odd $n$ and deduces that
$A/\sqrt{\gamma}$ is a rational orthogonal matrix; see also
Kominers \cite[Lemma~2.1]{Kominers2026}. We give the short argument for
completeness.

\begin{proof}
The identity $RR^{\tr}=N^2I$ is the classical quaternion rotation formula and is
checked directly. For the converse, taking determinants in $AA^{\tr}=\gamma I$
gives $(\det A)^2=\gamma^3$. Hence $\gamma>0$, and in the prime factorisation of
$\gamma$ every exponent $e$ satisfies $3e$ even, so $e$ is even and $\gamma$ is a
square.
\end{proof}

Transposition, permutations of rows or columns, and sign changes of whole rows
or columns all send orthogonality factors to orthogonality factors with the same
constant. Modulo these operations, the factors $A$ and $B$ of
Example~\ref{ex:factors} arise from \eqref{eq:quat} at the quaternions
$(-6,-4,-2,-1)$ and $(-7,-6,-3,-1)$, of norms $57$ and $95$; their constants are
accordingly $57^2=3249$ and $95^2=9025$. Every constant in
Table~\ref{tab:tower} is a perfect square, as Lemma~\ref{lem:quaternion}
requires.

Example~\ref{ex:factors} is the point of departure. The class of order-$3$
orthogonality factors is nonempty and rich, while the class of order-$3$ Euler
magic matrices is empty. A construction that consumes only the former can
therefore reach orders divisible by $3$, whereas one that consumes the latter
cannot get started.

\section{The lexicographically reindexed tensor construction}
\label{sec:construction}

\subsection{The construction}

Identify $\{0,1,\dots,3^k-1\}$ with $\FF_3^k$ by base-three digits, most
significant first: the index $v=(v_1,\dots,v_k)\in\FF_3^k$ corresponds to
$\sum_{i=1}^{k} v_i\,3^{\,k-i}$. Under this identification the plain Kronecker
product $A_1\otimes\cdots\otimes A_k$ has entries
$\prod_{i} (A_i)_{v_i,w_i}$ at position $(v,w)$; this is the lexicographic
ordering of the tensor index.

\begin{definition}\label{def:kronlex}
Let $k\ge1$, let $L\in GL_k(\FF_3)$, and let $A_1,\dots,A_k\in\ZZ^{3\times3}$.
The \emph{$L$-reindexed tensor product} is the matrix
$M_L(A_1,\dots,A_k)\in\ZZ^{3^k\times3^k}$ with entries
\[
M_L(A_1,\dots,A_k)_{v,w}\;=\;\prod_{i=1}^{k}\,(A_i)_{(Lv)_i,\;w_i},
\qquad v,w\in\FF_3^k .
\]
\end{definition}

Thus the columns are indexed lexicographically and the rows are indexed
lexicographically after applying $L$; equivalently $M_L$ is the row permutation
of $A_1\otimes\cdots\otimes A_k$ induced by $L$. Taking $L=I$ recovers the plain
Kronecker product.

\subsection{Orthogonality is inherited; the diagonals are not}

\begin{lemma}\label{lem:orth}
With the notation of Definition~\ref{def:kronlex}, if each $A_i$ is an
orthogonality factor with constant $\gamma_i$ and $\gamma=\prod_i\gamma_i\ne0$,
then $M_LM_L^{\tr}=\gamma I$ for every $L\in GL_k(\FF_3)$.
\end{lemma}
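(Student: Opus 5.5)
The plan is to reduce to the plain Kronecker product and then undo the reindexing by a permutation. Write $K=A_1\otimes\cdots\otimes A_k$, so that under the base-three identification $K_{u,w}=\prod_i (A_i)_{u_i,w_i}$ for $u,w\in\FF_3^k$. By Definition~\ref{def:kronlex}, $(M_L)_{v,w}=K_{Lv,w}$. Since $L\in GL_k(\FF_3)$, the map $v\mapsto Lv$ is a bijection of $\FF_3^k$. Hence $M_L=P_LK$, where $P_L$ is the $3^k\times3^k$ permutation matrix with $(P_L)_{v,u}=1$ exactly when $u=Lv$. Only bijectivity of $v\mapsto Lv$ is used here; linearity plays no role at this stage.

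First I would show $KK^{\tr}=\gamma I$. This follows from the mixed-product rule:
\[
KK^{\tr}=(A_1A_1^{\tr})\otimes\cdots\otimes(A_kA_k^{\tr})=(\gamma_1I_3)\otimes\cdots\otimes(\gamma_kI_3)=\gamma I_{3^k}.
\]
One can also see it entrywise. The $(u,u')$ entry of $KK^{\tr}$ is
\[
\sum_{w\in\FF_3^k}\prod_i (A_i)_{u_i,w_i}(A_i)_{u'_i,w_i}=\prod_i\sum_{w_i}(A_i)_{u_i,w_i}(A_i)_{u'_i,w_i}=\prod_i\gamma_i\,\delta_{u_i,u'_i}.
\]
The sum factors because $w$ ranges over a product set.

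Then
\[
M_LM_L^{\tr}=P_LKK^{\tr}P_L^{\tr}=\gamma P_LP_L^{\tr}=\gamma I,
\]
since permutation matrices are orthogonal. Equivalently, $(M_LM_L^{\tr})_{v,v'}=(KK^{\tr})_{Lv,Lv'}=\gamma\,\delta_{Lv,Lv'}=\gamma\,\delta_{v,v'}$, where the last step uses injectivity of $L$.

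There is no real obstacle here. The only care needed is in bookkeeping: checking that the index convention in Definition~\ref{def:kronlex} permutes only rows, and that the column index $w$ still runs over all of $\FF_3^k$ so the sum factors. The hypothesis $\gamma\ne0$ is used only to match Definition~\ref{def:factor}. By contrast, the diagonal conditions would be moved off the diagonal by the row permutation, which is why they are not inherited.
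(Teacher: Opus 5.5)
Your proof is correct and follows the paper's argument. The mixed-product rule gives $KK^{\tr}=\gamma I$ for the plain Kronecker product, and the $L$-reindexing is a row permutation $P$, so $M_LM_L^{\tr}=PKK^{\tr}P^{\tr}=\gamma I$; your entrywise check and your note that only bijectivity of $v\mapsto Lv$ is used are consistent with this.
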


\begin{proof}
The Kronecker product satisfies
$(A_1\otimes\cdots\otimes A_k)(A_1\otimes\cdots\otimes A_k)^{\tr}
=\bigotimes_i (A_iA_i^{\tr})=\bigotimes_i(\gamma_i I)=\gamma I$.
Reindexing rows by a bijection $\sigma$ and columns by a bijection $\tau$
replaces $N$ by $P_\sigma N P_\tau^{\tr}$ for permutation matrices
$P_\sigma,P_\tau$; then
$(P_\sigma NP_\tau^{\tr})(P_\sigma NP_\tau^{\tr})^{\tr}
=P_\sigma NN^{\tr}P_\sigma^{\tr}=\gamma P_\sigma P_\sigma^{\tr}=\gamma I$.
\end{proof}

Conditions \eqref{eq:main} and \eqref{eq:anti} behave quite differently: they
refer to specific positions and are destroyed by reindexing. In particular they
are \emph{not} automatic for the plain Kronecker product.

\begin{example}[the reindexing is necessary]\label{ex:necessary}
For the factors $A,B$ of Example~\ref{ex:factors}, the plain Kronecker product
$A\otimes B$ satisfies $(A\otimes B)(A\otimes B)^{\tr}=\gamma I$ with
$\gamma=\gamma_A\gamma_B=29\,322\,225$ by Lemma~\ref{lem:orth}, and its $81$
entry squares are pairwise distinct. But its diagonal square-sums are
\[
\sum_{i=0}^{8}(A\otimes B)_{i,i}^2=89\,378\,208,
\qquad
\sum_{i=0}^{8}(A\otimes B)_{i,8-i}^2=13\,509\,612,
\]
neither equal to $\gamma$. So $A\otimes B$ is not an Euler magic matrix, and no
appeal to multiplicativity of the Kronecker product can produce one. The
diagonal conditions must be \emph{created}, and that is what the reindexing does.
\end{example}

\subsection{Which reindexings work}

For $s\in\FF_3^k$ write $\supp(s)=\{\,i : s_i\ne0\,\}$.

\begin{theorem}\label{thm:criterion}
Let $k\ge1$ and $L\in GL_k(\FF_3)$, and suppose
\begin{equation}\tag{$\star$}\label{eq:star}
\supp(L^{\tr}s)\ne\supp(s)\qquad\text{for every } s\in\FF_3^k\setminus\{0\}.
\end{equation}
Let $A_1,\dots,A_k\in\ZZ^{3\times3}$ be orthogonality factors with constants
$\gamma_1,\dots,\gamma_k$ and put $\gamma=\prod_i\gamma_i\ne0$. Then
$M=M_L(A_1,\dots,A_k)$ satisfies
\[
\sum_{v\in\FF_3^k} M_{v,v}^{2}=\gamma
\qquad\text{and}\qquad
\sum_{v\in\FF_3^k} M_{v,\bar v}^{2}=\gamma ,
\]
where $\bar v=(2,\dots,2)-v$ is the antidiagonal partner of $v$. Consequently
$M$ is an Euler magic matrix of order $3^k$ with constant $\gamma$.
\end{theorem}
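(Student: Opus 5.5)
The diagonal sum is $\sum_v \prod_i a_i((Lv)_i, v_i)$ with $a_i$ the entry-square array of $A_i$, which by Lemma~\ref{lem:semimagic} is semi-magic with line sum $\gamma_i$. The plan is a Fourier expansion over $\FF_3$. Let $\omega=e^{2\pi i/3}$. For each $i$, expand $a_i$ in characters of $\FF_3^2$:
\[
a_i(x,y)=\sum_{\alpha,\beta\in\FF_3}\hat a_i(\alpha,\beta)\,\omega^{\alpha x+\beta y},
\qquad
\hat a_i(\alpha,\beta)=\tfrac19\sum_{x,y}a_i(x,y)\,\omega^{-\alpha x-\beta y}.
\]
The semi-magic property says exactly the following. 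Summing over $y$ gives $\hat a_i(\alpha,0)=0$ for $\alpha\ne0$, and summing over $x$ gives $\hat a_i(0,\beta)=0$ for $\beta\ne0$. In addition $\hat a_i(0,0)=\gamma_i/3$. So the only surviving coefficients are $(0,0)$ and those with $\alpha\ne0$ and $\beta\ne0$.

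Next substitute the expansions into the product and sum over $v$. Writing $\alpha=(\alpha_i)$ and $\beta=(\beta_i)$ in $\FF_3^k$, we get
\[
\sum_v M_{v,v}^2=\sum_{\alpha,\beta}\Bigl(\prod_i\hat a_i(\alpha_i,\beta_i)\Bigr)\sum_v\omega^{\langle\alpha,Lv\rangle+\langle\beta,v\rangle}
=3^k\sum_{\alpha}\prod_i\hat a_i\bigl(\alpha_i,(-L^{\tr}\alpha)_i\bigr).
\]
This holds because the inner sum is $3^k$ when $L^{\tr}\alpha+\beta=0$ and $0$ otherwise. A term survives only if, for each $i$, $\alpha_i$ and $\beta_i=-(L^{\tr}\alpha)_i$ are either both zero or both nonzero. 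That is, it requires $\supp(L^{\tr}\alpha)=\supp(\alpha)$. By $(\star)$ this forces $\alpha=0$, and the sum collapses to $3^k\prod_i\gamma_i/3=\gamma$.

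For the antidiagonal the plan is to rewrite the sum as $\sum_v\prod_i a_i((Lv)_i,2-v_i)$. The map $y\mapsto 2-y$ is a column reflection, so $b_i(x,y)=a_i(x,2-y)$ is again semi-magic with line sum $\gamma_i$. Its nonzero Fourier coefficients occur at the same positions, namely $(0,0)$ and the pairs with both coordinates nonzero. Running the identical computation for the $b_i$ gives $\gamma$; only the support pattern was used, not the values. (Equivalently, $\hat b_i(\alpha,\beta)=\omega^{2\beta}\hat a_i(\alpha,-\beta)$, which preserves that pattern.)

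Finally, Lemma~\ref{lem:orth} supplies \eqref{eq:orth}, and the two identities are \eqref{eq:main} and \eqref{eq:anti}. Their indexing matches because the lexicographic identification sends $n-1-j$ to $(2,\dots,2)-v$. The main point requiring care is the support-pattern translation of semi-magicness. In particular, I need to check that the sign convention in $\beta=-L^{\tr}\alpha$ does not affect supports, which it does not, since negation preserves support.
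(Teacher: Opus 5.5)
Your proposal is correct and follows the paper's proof: the same character expansion, the same vanishing of $\hat a_i(\alpha,\beta)$ unless $(\alpha,\beta)=(0,0)$ or $\alpha\ne0\ne\beta$, and the same collapse to $\alpha=0$ via $(\star)$. The one difference is on the antidiagonal: you reflect columns ($b_i(x,y)=a_i(x,2-y)$ is again semi-magic with line sum $\gamma_i$) and rerun the main-diagonal computation, whereas the paper expands directly and carries the extra root of unity $\omega^{2\sum_i t_i}$. There is one harmless slip: the factor in your parenthetical should be $\omega^{-2\beta}=\omega^{\beta}$ rather than $\omega^{2\beta}$, which changes neither the support pattern nor your main argument.
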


\begin{proof}
Let $\omega=e^{2\pi i/3}$ and for an array $a:\FF_3\times\FF_3\to\ZZ$ write
\[
\widehat a(s,t)=\tfrac19\sum_{x,y\in\FF_3}a(x,y)\,\omega^{-sx-ty},
\qquad
a(x,y)=\sum_{s,t\in\FF_3}\widehat a(s,t)\,\omega^{sx+ty}.
\]
Let $a_i$ be the entry-square array of $A_i$. By Lemma~\ref{lem:semimagic},
$a_i$ is semi-magic with line sum $\gamma_i$. We first record what this says on
the Fourier side. For fixed $x$,
\[
\sum_{y}a_i(x,y)=\sum_{s,t}\widehat{a_i}(s,t)\,\omega^{sx}\sum_{y}\omega^{ty}
=3\sum_{s}\widehat{a_i}(s,0)\,\omega^{sx},
\]
since $\sum_y\omega^{ty}=3$ if $t=0$ and $0$ otherwise. This equals $\gamma_i$
for all $x$ exactly when $\widehat{a_i}(0,0)=\gamma_i/3$ and
$\widehat{a_i}(s,0)=0$ for $s\ne0$. Symmetrically, equality of all column sums
gives $\widehat{a_i}(0,t)=0$ for $t\ne0$. Hence
\begin{equation}\label{eq:support}
\widehat{a_i}(s,t)=0 \quad\text{unless } (s,t)=(0,0) \text{ or } s\ne0\ne t,
\qquad \widehat{a_i}(0,0)=\gamma_i/3 .
\end{equation}

Now compute the main-diagonal square-sum. By
Definition~\ref{def:kronlex}, $M_{v,v}^2=\prod_i a_i((Lv)_i,v_i)$, so
\begin{align*}
D:=\sum_{v\in\FF_3^k}M_{v,v}^2
&=\sum_{v}\prod_{i=1}^{k}\ \sum_{s_i,t_i\in\FF_3}
   \widehat{a_i}(s_i,t_i)\,\omega^{\,s_i(Lv)_i+t_iv_i}\\
&=\sum_{s,t\in\FF_3^k}\Big(\prod_i\widehat{a_i}(s_i,t_i)\Big)
   \sum_{v}\omega^{\,\langle s,Lv\rangle+\langle t,v\rangle}\\
&=\sum_{s,t}\Big(\prod_i\widehat{a_i}(s_i,t_i)\Big)
   \sum_{v}\omega^{\,\langle L^{\tr}s+t,\;v\rangle}
\;=\;3^k\sum_{s\in\FF_3^k}\ \prod_{i=1}^{k}
   \widehat{a_i}\big(s_i,\,-(L^{\tr}s)_i\big),
\end{align*}
using $\sum_{v\in\FF_3^k}\omega^{\langle u,v\rangle}=3^k$ if $u=0$ and $0$
otherwise, which forces $t=-L^{\tr}s$.

Fix $s\ne0$. By \eqref{eq:support} the $i$-th factor
$\widehat{a_i}(s_i,-(L^{\tr}s)_i)$ vanishes unless $s_i$ and $(L^{\tr}s)_i$ are
either both zero or both nonzero. So the whole product vanishes unless
$\supp(L^{\tr}s)=\supp(s)$, which \eqref{eq:star} excludes. Only $s=0$
contributes, and there $L^{\tr}s=0$, giving
\[
D=3^k\prod_{i=1}^{k}\widehat{a_i}(0,0)=3^k\prod_{i=1}^{k}\frac{\gamma_i}{3}
=\prod_{i=1}^{k}\gamma_i=\gamma .
\]

For the antidiagonal, $\bar v_i=2-v_i$, so
$M_{v,\bar v}^2=\prod_i a_i((Lv)_i,2-v_i)$ and the same expansion gives
\[
D':=\sum_{v}M_{v,\bar v}^2
=\sum_{s,t}\Big(\prod_i\widehat{a_i}(s_i,t_i)\Big)\,\omega^{2\sum_i t_i}
  \sum_{v}\omega^{\langle L^{\tr}s-t,\,v\rangle}
=3^k\sum_{s}\omega^{2\sum_i (L^{\tr}s)_i}\prod_{i}
  \widehat{a_i}\big(s_i,(L^{\tr}s)_i\big).
\]
The vanishing analysis is identical --- the extra root of unity is a scalar and
does not affect which terms survive --- so again only $s=0$ contributes and
$D'=\gamma$. Together with Lemma~\ref{lem:orth} this gives all three conditions
of Definition~\ref{def:euler}.
\end{proof}

Condition \eqref{eq:star} is not merely a convenient sufficient condition: it is
exactly the condition under which a linear reindexing forces the two identities
\emph{universally}, that is for every tuple of semi-magic factor arrays. We
state this for a general prime, since nothing in the argument is special to $3$.

\begin{theorem}[universal linear-reindexing criterion]\label{thm:converse}
Let $p$ be prime, $k\ge1$ and $L\in GL_k(\FF_p)$. Let $W$ be the set of
$p\times p$ integer arrays all of whose row sums and all of whose column sums
are equal to a common value $\gamma(a)$. The following are equivalent.
\begin{enumerate}
\item[\textup{(i)}] $\supp(L^{\tr}s)\ne\supp(s)$ for every
  $s\in\FF_p^k\setminus\{0\}$.
\item[\textup{(ii)}] For \emph{every} $a_1,\dots,a_k\in W$,
\[
\sum_{v\in\FF_p^k}\prod_{i=1}^{k}a_i\big((Lv)_i,\,v_i\big)
\;=\;
\sum_{v\in\FF_p^k}\prod_{i=1}^{k}a_i\big((Lv)_i,\,p-1-v_i\big)
\;=\;\prod_{i=1}^{k}\gamma(a_i).
\]
\end{enumerate}
Each of the two identities in \textup{(ii)}, taken alone, is already equivalent
to \textup{(i)}.
\end{theorem}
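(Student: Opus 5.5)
The plan is to redo the character computation from the proof of Theorem~\ref{thm:criterion} over $\FF_p$, and then read off both directions from a single Fourier expansion. Let $\omega=e^{2\pi i/p}$ and expand each $a_i$ as $a_i(x,y)=\sum_{s,t}\widehat{a_i}(s,t)\omega^{sx+ty}$, with $\widehat a(s,t)=p^{-2}\sum_{x,y}a(x,y)\omega^{-sx-ty}$. The same row/column-sum computation as in \eqref{eq:support} shows that $a\in W$ holds exactly when
\[
\widehat a(s,0)=0=\widehat a(0,t)\quad (s,t\ne0),
\]
and that then $\widehat a(0,0)=\gamma(a)/p$.

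Expanding the two sums in (ii) as in the proof of Theorem~\ref{thm:criterion} gives
\[
D=p^k\sum_{s}\prod_i\widehat{a_i}\big(s_i,-(L^{\tr}s)_i\big),
\qquad
D'=p^k\sum_{s}\omega^{(p-1)\sum_i(L^{\tr}s)_i}\prod_i\widehat{a_i}\big(s_i,(L^{\tr}s)_i\big).
\]
The $s=0$ term equals $\prod_i\gamma(a_i)$ in each case. For (i)$\Rightarrow$(ii), each term with $s\ne0$ vanishes, because by (i) some coordinate $i$ has exactly one of $s_i$, $(L^{\tr}s)_i$ equal to zero, and then the $i$-th factor is zero.

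The converse is the real content. Assume (i) fails for some $s^0\ne0$, and put $u=L^{\tr}s^0$, so $\supp u=\supp s^0=:S$. We will choose arrays $a_i\in W$ whose Fourier transforms are supported on $(0,0)$ together with a single frequency, so that only finitely many $s$ contribute. For $i\notin S$ take $a_i$ to be the constant array $1$, whose transform is supported at $(0,0)$. For $i\in S$ take
\[
a_i(x,y)=c+\epsilon\,\mathrm{Re}\,\omega^{s^0_ix-u_iy}\quad\text{(for $D$)}.
\]
To keep the entries integral, use instead $a_i(x,y)=c+\delta_{\{s^0_ix-u_iy=0\}}\cdot p-1$, i.e.\ the indicator of a line with both slopes nonzero, shifted by a constant. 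The indicator of the line $\{s^0_ix-u_iy=0\}$ has one point in each row and in each column, so it lies in $W$. Its transform is supported on the multiples $\lambda(s^0_i,-u_i)$ for $\lambda\in\FF_p$.

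With this choice, the nonzero contributions to $D$ come from those $s$ with $s_i=\lambda_i s^0_i$ and $(L^{\tr}s)_i=\lambda_i u_i$ for $i\in S$, and $s_i=0=(L^{\tr}s)_i$ for $i\notin S$. Each such term is a product of explicit positive Fourier coefficients: the transform of a line indicator equals $1/p$ on its support. Hence $D-\prod\gamma(a_i)$ is a sum of positive terms, and it includes the term $s=s^0$ with $\lambda_i=1$. It is therefore nonzero, so the identity for $D$ fails.

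For $D'$, use the lines $\{s^0_ix+u_iy=0\}$ instead. The surviving terms now carry the phase $\omega^{-\sum(L^{\tr}s)_i}$, so positivity is no longer automatic. This is the main obstacle. My first attempt would be to replace the line indicator by the reflected array $a_i(x,p-1-y)$. That is again a line indicator, translated, and it converts the antidiagonal sum for $(a_i)$ into the main-diagonal sum for the reflected arrays, up to replacing $L$. More precisely, the antidiagonal sum for $(a_i)$ equals the main-diagonal sum for $b_i(x,y)=a_i(x,p-1-y)$, and $b_i\in W$ whenever $a_i\in W$. The antidiagonal identity holding for all of $W$ is therefore literally equivalent to the diagonal identity holding for all of $W$. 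This reduces the second equivalence to the first, and it gives the final sentence of the theorem: each identity alone is equivalent to (i).
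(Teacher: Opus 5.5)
Your proof is correct, and its converse half takes a genuinely different route from the paper's.

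The paper never exhibits a counterexample inside $W$. It observes that both defects are multilinear, and shows that $W$ is a lattice of rank $(p-1)^2+1$ whose $\ZZ$-basis is a $\CC$-basis of the complexified space. It then takes as basis the characters $\chi_{(s,t)}$ with $(s,t)=(0,0)$ or $s\ne0\ne t$, and evaluates on basis tuples. There each sum collapses to the single term $p^k[\tau=\mp L^{\tr}\sigma]$, so no cancellation can occur, and the antidiagonal costs only a harmless root of unity.

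You stay with integer arrays throughout. Line indicators with nonzero slopes have nonnegative Fourier transforms, so $D-\prod_i\gamma(a_i)$ is a sum of nonnegative terms that includes the positive $s=s^0$ term. Positivity does the work of the paper's single-term collapse. This avoids the rank count, the extension of scalars and the independence of characters. Your column-reversal involution of $W$ also explains the theorem's last sentence more transparently than tracking the phase, since it shows directly that the two universal identities are equivalent to each other.

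Two small repairs are needed:
\begin{itemize}
\item The integer formula for $a_i$ is garbled as typed, and the positivity claim needs $c\ge0$. Take $c=0$, i.e.\ the plain line indicator.
\item Drop ``up to replacing $L$''. As your next sentence correctly says, $L$ is unchanged.
\end{itemize}

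Your route also gives something the paper's does not. Use a line indicator with nonzero slopes for $i\notin S$ as well, and write the $i$-th line as $\alpha_ix+\beta_iy=0$, with $(\alpha_i,\beta_i)=(s^0_i,-u_i)$ for $i\in S$. Then every test array is the entry-square array of a signed permutation matrix, which is an orthogonality factor with $\gamma_i=1$. Moreover $D$ is simply the number of solutions of $\bigl(\mathrm{diag}(\alpha)L+\mathrm{diag}(\beta)\bigr)v=0$. This count is at least $p>1$, because the transpose of that matrix kills the indicator vector of $S$. Hence \eqref{eq:star} characterises the $L$ that force the identities even over all tuples of orthogonality factors. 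That is a restriction of the domain on which Remarks~\ref{rem:sharp} and~\ref{rem:scope} make no claim.
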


\begin{proof}
Let $F$ and $F'$ be the two differences (left side minus $\prod_i\gamma(a_i)$)
in (ii), regarded as functions of $(a_1,\dots,a_k)$. Both are multilinear: the
sums are multilinear by inspection, and $\prod_i\gamma(a_i)$ is a product of
linear functionals in distinct arguments.

$W$ is a free $\ZZ$-module of rank $d=(p-1)^2+1$, with coordinates the entries
$a_{x,y}$ for $x,y<p-1$ together with $\gamma(a)$: the remaining entries are
recovered by $a_{x,p-1}=\gamma-\sum_{y<p-1}a_{x,y}$,
$a_{p-1,y}=\gamma-\sum_{x<p-1}a_{x,y}$ and
$a_{p-1,p-1}=\gamma-\sum_{y<p-1}a_{p-1,y}$, and one checks that the last column
then sums to $\gamma$ automatically. The complex arrays satisfying the same
conditions form a space $W_\CC$ of dimension $d$ over $\CC$, cut out by the same
integer linear equations, so a $\ZZ$-basis of $W$ is a $\CC$-basis of $W_\CC$.
A multilinear form is determined by its values on tuples of basis elements, and
those values are the same numbers whether $F$ is read on $W^k$ or on $W_\CC^k$.
Hence $F$ vanishes on $W^k$ if and only if its $\CC$-multilinear extension
vanishes on $W_\CC^k$; in particular we may test on any $\CC$-basis of $W_\CC$,
and no question of rationality, or of the reality of an individual array, arises.

Let $\omega=e^{2\pi i/p}$, $\chi_{(s,t)}(x,y)=\omega^{sx+ty}$, and
$B=\{(0,0)\}\cup\{(s,t):s\ne0\ne t\}$. For $t\ne0$ every row sum of
$\chi_{(s,t)}$ vanishes and for $s\ne0$ every column sum vanishes, so
$\chi_{(s,t)}\in W_\CC$ for $(s,t)\in B$, while $\chi_{(0,0)}$ is the all-ones
array. Distinct characters of $(\ZZ/p)^2$ are linearly independent and
$|B|=(p-1)^2+1=d$, so these form a $\CC$-basis of $W_\CC$. Moreover
$\gamma(\chi_{(0,0)})=p$ and $\gamma(\chi_{(s,t)})=0$ for $s,t\ne0$.

Evaluate at $a_i=\chi_{(\sigma_i,\tau_i)}$ with $(\sigma_i,\tau_i)\in B$. Since
$\sum_{v\in\FF_p^k}\omega^{\langle u,v\rangle}$ is $p^k$ for $u=0$ and $0$
otherwise,
\[
\sum_{v}\prod_i\omega^{\sigma_i(Lv)_i+\tau_iv_i}
=\sum_{v}\omega^{\langle L^{\tr}\sigma+\tau,\;v\rangle}
=p^k\,\big[\,L^{\tr}\sigma+\tau=0\,\big],
\]
whereas $\prod_i\gamma(\chi_{(\sigma_i,\tau_i)})$ is $p^k$ when every
$(\sigma_i,\tau_i)=(0,0)$ and $0$ otherwise. In that first case
$\sigma=\tau=0$, both sides equal $p^k$ and $F$ vanishes. Otherwise the second
quantity is $0$, so $F$ is nonzero at this basis tuple precisely when
$\tau=-L^{\tr}\sigma$. Such a tuple exists with $(\sigma,\tau)\ne0$ exactly when
some $\sigma\ne0$ has, for every $i$, $\sigma_i$ and $(L^{\tr}\sigma)_i$ both
zero or both nonzero --- that is, $\supp(L^{\tr}\sigma)=\supp(\sigma)$. So
$F\equiv0$ if and only if (i) holds. For $F'$ the same computation acquires the
factor $\omega^{(p-1)\sum_i\tau_i}$, a root of unity and so nonzero, and the
condition $\tau=L^{\tr}\sigma$; the support condition is unchanged. Hence
$F'\equiv0$ if and only if (i) holds as well.
\end{proof}

\begin{remark}\label{rem:sharp}
The domain in (ii) matters and should not be enlarged in the reading.
Theorem~\ref{thm:converse} characterises the $L$ that force the identities for
\emph{every} semi-magic tuple. By Lemma~\ref{lem:semimagic} the entry-square
arrays of orthogonality factors are semi-magic, but they do not exhaust $W$:
they are nonnegative and subject to further arithmetic constraints. So an $L$
violating \eqref{eq:star} is not thereby prevented from satisfying the
identities on some particular, specially structured family of factors. We make
no claim in that direction, and nothing below depends on one.

As an independent check on Theorem~\ref{thm:converse}, both identities were
evaluated on all $5^k$ tuples drawn from a basis of $W$ --- which by
multilinearity decides them for all semi-magic arrays --- for every $L$ in
$GL_2(\FF_3)$ and $GL_3(\FF_3)$. Condition \eqref{eq:star} held for exactly
those $L$ for which the identities held, in all $48+11\,232$ cases; $8$ and
$480$ maps respectively satisfy \eqref{eq:star}. This is verification, not part
of the proof.
\end{remark}

\subsection{Why powers of three}

Definition~\ref{def:kronlex} and condition \eqref{eq:star} make sense verbatim
over $\FF_p$ for any prime $p$, building order $p^k$ from $p\times p$
orthogonality factors; Lemma~\ref{lem:semimagic} and the character computation
go through unchanged. The following shows that for a two-factor product the
condition is satisfiable only for $p\le3$. This is what confines the base
blocks of our construction to $p=3$; see Remark~\ref{rem:scope} for what it
does and does not exclude.

\begin{proposition}\label{prop:whythree}
Let $p$ be prime. There exists $L\in GL_2(\FF_p)$ satisfying \eqref{eq:star} if
and only if $p\le3$.
\end{proposition}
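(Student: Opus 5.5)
Set $N=L^{\tr}=\begin{pmatrix}a&b\\ c&d\end{pmatrix}\in GL_2(\FF_p)$. Since $\supp(\lambda s)=\supp(s)$ for $\lambda\in\FF_p^{\times}$, I will split the nonzero vectors $s$ by their support and test \eqref{eq:star} on one representative per line. For each support pattern, \eqref{eq:star} becomes an explicit condition on $a,b,c,d$.

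First I handle the coordinate axes. For $s=(1,0)$ we have $Ns=(a,c)$, and \eqref{eq:star} requires $\supp(Ns)\ne\{1\}$. The case $a=c=0$ is impossible by invertibility, so this condition is equivalent to $c\ne0$. Symmetrically, $s=(0,1)$ gives $Ns=(b,d)$ and forces $b\ne0$.

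Next come the vectors of full support, represented by $s=(1,y)$ with $y\in\FF_p^{\times}$. Here $Ns=(a+by,\,c+dy)$, and \eqref{eq:star} says that this vector is not of full support, i.e.
\[
f(y):=(a+by)(c+dy)=0\qquad\text{for all } y\in\FF_p^{\times}.
\]
Since $b\ne0$, $f$ is a polynomial of degree $1$ or $2$ in $y$, and in particular it is not the zero polynomial. It therefore has at most $2$ roots, so $p-1\le 2$, which gives $p\le3$. This is the necessity direction. The only point needing care is that $f\not\equiv0$, and this is exactly where the axis condition $b\ne0$ is used.

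For sufficiency I will exhibit explicit matrices and check the finitely many lines directly. For $p=2$ take $N=\begin{pmatrix}1&1\\1&0\end{pmatrix}$, which has $\det N=1$ in $\FF_2$. It sends $(1,0)\mapsto(1,1)$, $(0,1)\mapsto(1,0)$ and $(1,1)\mapsto(0,1)$, so all three supports change.

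For $p=3$ choose the coefficients so that $a+by$ vanishes at $y=1$ and $c+dy$ vanishes at $y=2$. Taking $b=d=1$ gives $a=2$ and $c=1$, so $N=\begin{pmatrix}2&1\\1&1\end{pmatrix}$ with $\det N=1$. Its action on representatives is
\[
(1,0)\mapsto(2,1),\quad (0,1)\mapsto(1,1),\quad (1,1)\mapsto(0,2),\quad (1,2)\mapsto(1,0).
\]
Each of these changes the support. Setting $L=N^{\tr}$ then gives the required reindexing in both cases.
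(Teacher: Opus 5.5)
Your proof is correct and is essentially the paper's argument. On full-support vectors, $(\star)$ forces $(a+by)(c+dy)$ to vanish on all of $\FF_p^{\times}$, which gives $p-1\le 2$, and sufficiency is shown by explicit matrices; your $p=3$ example differs from the paper's $L_9$, which is immaterial.

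The one small difference is that the paper gets $a+by\not\equiv 0$ and $c+dy\not\equiv 0$ directly from invertibility, since the rows of $N=L^{\tr}$ are nonzero. Your axis conditions are therefore not needed for necessity. If you do use them, you must cite $c\ne0$ as well as $b\ne0$, because both factors must be nonzero polynomials.
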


\begin{proof}
Write $L=\begin{pmatrix}l_{11}&l_{12}\\l_{21}&l_{22}\end{pmatrix}$, so that
$(L^{\tr}s)_1=l_{11}s_1+l_{21}s_2$ and $(L^{\tr}s)_2=l_{12}s_1+l_{22}s_2$.

Suppose $L$ satisfies \eqref{eq:star}, and consider $s=(a,b)$ with
$a,b\ne0$, so $\supp(s)=\{1,2\}$. Then \eqref{eq:star} requires
$\supp(L^{\tr}s)\ne\{1,2\}$, that is
\[
l_{11}a+l_{21}b=0 \qquad\text{or}\qquad l_{12}a+l_{22}b=0 .
\]
Dividing by $a$ and writing $r=b/a$, which ranges over all of $\FF_p^{\times}$
as $(a,b)$ ranges over pairs of nonzero elements, we need
\[
l_{11}+l_{21}r=0 \qquad\text{or}\qquad l_{12}+l_{22}r=0
\qquad\text{for every } r\in\FF_p^{\times}.
\]
Neither $(l_{11},l_{21})$ nor $(l_{12},l_{22})$ is the zero vector, since $L$ is
invertible and these are its columns. Hence each of the two displayed equations
is a nontrivial linear equation in $r$ and has at most one root in $\FF_p$.
Their roots therefore cover at most two elements of $\FF_p^{\times}$, so
$p-1\le2$, i.e.\ $p\le3$.

Conversely, for $p=3$ the matrix $L=\begin{pmatrix}1&1\\2&1\end{pmatrix}$
satisfies \eqref{eq:star}: its transpose sends $(1,0)\mapsto(1,1)$,
$(0,1)\mapsto(2,1)$, $(1,1)\mapsto(0,2)$ and $(1,2)\mapsto(2,0)$, and in each
case the support changes; the remaining nonzero $s$ are scalar multiples of
these, with the same supports. For $p=2$ the condition is satisfied by
$\begin{pmatrix}1&1\\1&0\end{pmatrix}$, giving order $4$, where a proper
example is classical.
\end{proof}

Combining with Theorem~\ref{thm:converse} turns this into a statement about the
mechanism rather than about one criterion.

\begin{corollary}\label{cor:whythree}
Let $p$ be prime. There exists $L\in GL_2(\FF_p)$ for which the two diagonal
identities hold for \emph{every} pair of semi-magic $p\times p$ arrays if and
only if $p\le3$.
\end{corollary}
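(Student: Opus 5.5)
The corollary follows by combining Theorem~\ref{thm:converse} with Proposition~\ref{prop:whythree} for $k=2$, so the plan is a short chain of equivalences.

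Fix a prime $p$ and $L\in GL_2(\FF_p)$. The phrase ``the two diagonal identities hold for every pair of semi-magic $p\times p$ arrays'' is condition (ii) of Theorem~\ref{thm:converse} for $k=2$. Here a semi-magic array means an element of $W$, an integer array with all row and column sums equal to a common $\gamma(a)$. By Theorem~\ref{thm:converse}, for each such $L$ this is equivalent to condition (i), namely $\supp(L^{\tr}s)\ne\supp(s)$ for all nonzero $s\in\FF_2^{\,}$ replaced by $\FF_p^2$; that is, to \eqref{eq:star} over $\FF_p$. Hence the set of $L$ with the universal property coincides with the set of $L\in GL_2(\FF_p)$ satisfying \eqref{eq:star}. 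Proposition~\ref{prop:whythree} says this set is nonempty if and only if $p\le3$, which gives both directions.

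Only two small points need care. First, the semi-magic arrays in the corollary must be read with the same integer (or rational) domain as $W$ in Theorem~\ref{thm:converse}. If one prefers complex arrays, the multilinearity argument in that proof already shows that vanishing on $W^k$ and on $W_\CC^k$ are equivalent, so the statement is unchanged. Second, the corollary asserts that both identities hold. This is still equivalent to (i), since each identity alone is equivalent to (i) by the last sentence of Theorem~\ref{thm:converse}.

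Neither step is a real obstacle. The only thing to check is that the quantifiers line up: ``there exists $L$ such that for every pair'' is exactly the existence of $L$ satisfying (ii), with no interchange of quantifiers. As a sanity check, the explicit witnesses from the proof of Proposition~\ref{prop:whythree} can be named for the ``if'' direction: $\begin{pmatrix}1&1\\2&1\end{pmatrix}$ for $p=3$ and $\begin{pmatrix}1&1\\1&0\end{pmatrix}$ for $p=2$.
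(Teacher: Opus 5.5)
Your proof is correct and is the paper's argument: Theorem~\ref{thm:converse} with $k=2$ shows that the $L\in GL_2(\FF_p)$ with the universal property are exactly those satisfying \eqref{eq:star}, and Proposition~\ref{prop:whythree} then says such $L$ exist precisely when $p\le3$. One slip: the phrase ``$s\in\FF_2^{\,}$ replaced by $\FF_p^2$'' should simply read $s\in\FF_p^2\setminus\{0\}$.
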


\begin{proof}
Immediate from Proposition~\ref{prop:whythree} and
Theorem~\ref{thm:converse} applied with $k=2$.
\end{proof}

\begin{remark}\label{rem:scope}
Corollary~\ref{cor:whythree} should be read narrowly, and in particular its
universal quantifier should not be dropped. It says that for $p>3$ no linear
reindexing of a two-factor product forces the diagonal identities for
\emph{every} pair of semi-magic factors. It does \emph{not} say that no linear
reindexing can ever succeed for $p>3$: an $L$ violating \eqref{eq:star} may
still satisfy the identities on particular, specially structured factors, and
the entry-square arrays of orthogonality factors are exactly such a restricted
family (Remark~\ref{rem:sharp}). We have not investigated that question.

Three further things are not excluded: some $L\in GL_k(\FF_p)$ satisfying
\eqref{eq:star} for $p>3$ and $k\ge3$, which we have not investigated; affine
or nonlinear reindexings; and any other route to proper examples in orders
$p^k$. What is established is that the universal two-factor mechanism used here
to build the base blocks singles out $p=3$, which is why the present paper is
about powers of three.
\end{remark}

\begin{remark}[the case $k=2$ is classical as an index structure]\label{rem:mols}
Write $L=\begin{pmatrix}a&b\\c&d\end{pmatrix}\in GL_2(\FF_3)$ and let
$S_1(v_1,v_2)=av_1+bv_2$ and $S_2(v_1,v_2)=cv_1+dv_2$ be the two coordinate
functions of $L$. Then $S_1$ is a Latin square of order $3$ exactly when
$a,b\ne0$, similarly for $S_2$, and $S_1,S_2$ are orthogonal exactly when $L$ is
invertible. So $(S_1,S_2)$ is a pair of orthogonal Latin squares precisely when
$L$ is invertible with all four entries nonzero, and a direct check of the $48$
elements of $GL_2(\FF_3)$ shows that these are exactly the $8$ maps satisfying
\eqref{eq:star}. For $L_9$ the pair is the classical order-$3$ example
\[
S_1=\begin{pmatrix}0&1&2\\1&2&0\\2&0&1\end{pmatrix},\qquad
S_2=\begin{pmatrix}0&1&2\\2&0&1\\1&2&0\end{pmatrix}.
\]
Orthogonal Latin squares of order $3$ and their Kronecker products are of course
long established, and appear in the product constructions for multimagic squares
of \cite{ZhangChenLi2015}. What does not follow from that structure is the
content of Theorem~\ref{thm:criterion}: orthogonality of the Latin pair says
nothing about the two \emph{diagonal square-sum} conditions of
Definition~\ref{def:euler}, which is what \eqref{eq:star} delivers. We do not
pursue the analogous description for $k\ge3$.
\end{remark}

\subsection{The two reindexings used below}

\begin{corollary}\label{cor:LL}
The matrices
\[
L_9=\begin{pmatrix}1&1\\2&1\end{pmatrix}\in GL_2(\FF_3),
\qquad
L_{27}=\begin{pmatrix}0&0&1\\1&1&0\\1&2&0\end{pmatrix}\in GL_3(\FF_3)
\]
satisfy \eqref{eq:star}. Consequently, for orthogonality factors
$A,B,C\in\ZZ^{3\times3}$ with constants $\gamma_A,\gamma_B,\gamma_C$,
\[
M_{L_9}(A,B)\ \text{is Euler magic of order }9\text{ with constant }
\gamma_A\gamma_B,
\]
\[
M_{L_{27}}(A,B,C)\ \text{is Euler magic of order }27\text{ with constant }
\gamma_A\gamma_B\gamma_C,
\]
provided the respective constants are nonzero.
\end{corollary}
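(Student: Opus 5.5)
The plan is to check condition \eqref{eq:star} directly for each of the two matrices and then quote Theorem~\ref{thm:criterion} with $k=2$ and $k=3$. Two preliminary remarks make the check finite and short. First, $\supp(cs)=\supp(s)$ and $L^{\tr}(cs)=cL^{\tr}s$ for $c\in\FF_3^{\times}$. So it suffices to test one representative of each line through the origin, or to argue by cases on the support pattern of $s$. Second, both matrices must lie in $GL_k(\FF_3)$, which I will confirm by a determinant computation.

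For $L_9$ nothing new is needed. This is exactly the matrix exhibited in the proof of Proposition~\ref{prop:whythree}. There $L_9^{\tr}$ was shown to change the support of the four representatives $(1,0),(0,1),(1,1),(1,2)$, which covers every nonzero $s$ up to scaling. Its determinant is $1\cdot1-1\cdot2=-1\ne0$.

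For $L_{27}$, expanding along the first row gives $\det L_{27}=\det\begin{pmatrix}1&1\\1&2\end{pmatrix}=1\ne0$. Transposing gives
\[
L_{27}^{\tr}s=(s_2+s_3,\ s_2-s_3,\ s_1),
\]
with $2=-1$ in $\FF_3$. I will suppose $u=L_{27}^{\tr}s$ has $\supp(u)=\supp(s)$ for some $s\ne0$ and derive a contradiction in three cases.
\begin{itemize}
\item The third coordinate forces $s_3\ne0\iff s_1\ne0$.
\item If $s_1=s_3=0$, then $s_2\ne0$ and $u=(s_2,s_2,0)$. This has support $\{1,2\}\ne\{2\}$.
\item If $s_1,s_3\ne0$ and $s_2=0$, then $u_2=-s_3\ne0$ while $2\notin\supp(s)$, a contradiction.
\item If $s_1,s_2,s_3$ are all nonzero, then both $s_2+s_3$ and $s_2-s_3$ must be nonzero. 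This is impossible, since in $\FF_3$ every nonzero $s_3$ equals $s_2$ or $-s_2$.
\end{itemize}
Hence \eqref{eq:star} holds for $L_{27}$.

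Finally, Theorem~\ref{thm:criterion} applies with $(A_1,A_2)=(A,B)$ and $(A_1,A_2,A_3)=(A,B,C)$, using the hypothesis that the product of the constants is nonzero. Combined with Lemma~\ref{lem:orth}, it yields that $M_{L_9}(A,B)$ and $M_{L_{27}}(A,B,C)$ are Euler magic of orders $9$ and $27$, with constants $\gamma_A\gamma_B$ and $\gamma_A\gamma_B\gamma_C$. The only step needing care is the case analysis for $L_{27}$, specifically getting the transpose right: the condition is on $L^{\tr}$, not $L$. As an independent sanity check I would also enumerate the $13$ projective points of $\FF_3^3$ by hand.
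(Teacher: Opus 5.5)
Your proposal is correct and follows the paper's route. For $L_9$, condition~\eqref{eq:star} is exactly the check in the proof of Proposition~\ref{prop:whythree}. For $L_{27}$, your formula $L_{27}^{\tr}s=(s_2+s_3,\,s_2-s_3,\,s_1)$ is right, and your three-case analysis is complete (all $13$ projective points indeed change support). Theorem~\ref{thm:criterion} then gives the conclusion. The extra appeal to Lemma~\ref{lem:orth} is harmless but redundant, since Theorem~\ref{thm:criterion} already concludes the full Euler magic property. Likewise, nonvanishing of the constants is automatic from Definition~\ref{def:factor}.
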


\subsection{Direct sums of reindexings}

Reindexings compose in the obvious way, and this is how the higher orders below
are assembled. For $L\in GL_k(\FF_3)$ and $L'\in GL_m(\FF_3)$ write
$L\oplus L'\in GL_{k+m}(\FF_3)$ for the block-diagonal map.

\begin{lemma}\label{lem:dsum}
Let $L\in GL_k(\FF_3)$, $L'\in GL_m(\FF_3)$, and let
$A_1,\dots,A_k,B_1,\dots,B_m\in\ZZ^{3\times3}$. Then
\begin{equation}\label{eq:dsum}
M_{L\oplus L'}(A_1,\dots,A_k,B_1,\dots,B_m)
= M_L(A_1,\dots,A_k)\otimes M_{L'}(B_1,\dots,B_m),
\end{equation}
the Kronecker product being taken in the lexicographic identification.
Moreover $L\oplus L'$ satisfies \eqref{eq:star} if and only if both $L$ and
$L'$ do.
\end{lemma}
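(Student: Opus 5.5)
The proof has two independent parts: the Kronecker factorisation \eqref{eq:dsum}, which is an entrywise identity, and the equivalence for \eqref{eq:star}, which is a statement about supports of vectors in $\FF_3^{k+m}$.

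For \eqref{eq:dsum}, write an index of $\FF_3^{k+m}$ as $v=(u,u')$ with $u\in\FF_3^k$ and $u'\in\FF_3^m$, and similarly $w=(x,x')$. Since $L\oplus L'$ is block-diagonal, $(L\oplus L')v=(Lu,L'u')$. By Definition~\ref{def:kronlex} the $(v,w)$ entry of the left side is therefore
\[
\prod_{i=1}^{k}(A_i)_{(Lu)_i,x_i}\cdot\prod_{j=1}^{m}(B_j)_{(L'u')_j,x'_j}
=M_L(A_1,\dots,A_k)_{u,x}\,M_{L'}(B_1,\dots,B_m)_{u',x'} .
\]
It remains to check that the lexicographic identification of $\FF_3^{k+m}$ is exactly the Kronecker indexing. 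The integer attached to $(u,u')$ is $3^m\cdot\mathrm{idx}(u)+\mathrm{idx}(u')$, because $u$ supplies the most significant digits. This is precisely the row (and column) convention of $X\otimes Y$ for $X$ of order $3^k$ and $Y$ of order $3^m$. Hence the entries agree position by position.

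For the support statement, note first that $(L\oplus L')^{\tr}=L^{\tr}\oplus L'^{\tr}$. For $s=(r,r')$ we get $(L\oplus L')^{\tr}s=(L^{\tr}r,L'^{\tr}r')$, and supports split as disjoint unions over the two blocks of coordinates. Therefore $\supp((L\oplus L')^{\tr}s)=\supp(s)$ holds iff both $\supp(L^{\tr}r)=\supp(r)$ and $\supp(L'^{\tr}r')=\supp(r')$ hold.

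We argue both directions contrapositively.
\begin{itemize}
\item If $L$ fails \eqref{eq:star}, witnessed by some $r\ne0$, then $s=(r,0)$ is nonzero. Its second block trivially has equal supports, since $L'^{\tr}0=0$, so $s$ witnesses failure for $L\oplus L'$. The case where $L'$ fails is symmetric.
\item If $L\oplus L'$ fails, witnessed by $s=(r,r')\ne0$, then at least one of $r,r'$ is nonzero. That component has equal supports by the splitting above, so it witnesses the failure of $L$ or of $L'$ respectively.
\end{itemize}

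The only step needing care is the indexing convention in \eqref{eq:dsum}. One must confirm that putting the $L$-block first matches the most-significant-digit ordering and the order of the Kronecker factors. Once that is checked, both parts are immediate from the block structure.
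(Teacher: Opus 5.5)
Your proof is correct and follows the paper's argument essentially step for step. Both use the block-diagonal action $(L\oplus L')(u,u')=(Lu,L'u')$ together with the match between the lexicographic and Kronecker indexings for the entrywise identity, and both use the splitting $(L\oplus L')^{\tr}(r,r')=(L^{\tr}r,L'^{\tr}r')$, with the witness $(r,0)$ in one direction and a nonzero block of a witness in the other, for the support equivalence; your explicit computation of the index as $3^m\,\mathrm{idx}(u)+\mathrm{idx}(u')$ just spells out what the paper asserts in one line.
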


\begin{proof}
Write an index of $\FF_3^{k+m}$ as $v=(x,y)$ with $x\in\FF_3^k$, $y\in\FF_3^m$;
this is exactly the lexicographic identification of $\{0,\dots,3^{k+m}-1\}$ with
the product. Since $(L\oplus L')(x,y)=(Lx,L'y)$, the entry of the left-hand side
of \eqref{eq:dsum} at $((x,y),(u,z))$ is
\[
\prod_{i=1}^{k}(A_i)_{(Lx)_i,u_i}\ \prod_{j=1}^{m}(B_j)_{(L'y)_j,z_j},
\]
which is the corresponding entry of the right-hand side.

For the second statement, let $s=(s',s'')$ be nonzero. Then
$(L\oplus L')^{\tr}s=(L^{\tr}s',L'^{\tr}s'')$, so
$\supp((L\oplus L')^{\tr}s)=\supp(s)$ holds if and only if
$\supp(L^{\tr}s')=\supp(s')$ and $\supp(L'^{\tr}s'')=\supp(s'')$. Taking
$s''=0$, for which the second condition is automatic, shows that a failure of
\eqref{eq:star} for $L$ produces one for $L\oplus L'$, and symmetrically for
$L'$; conversely a failure for $L\oplus L'$ at some $s\ne0$ forces a failure for
$L$ or for $L'$ at $s'$ or $s''$, whichever is nonzero.
\end{proof}

Finally, once Euler magic matrices are available, the plain lexicographic
Kronecker product preserves the conditions. This is due to
Kominers \cite[\S5]{Kominers2026}; we include the short proof for completeness.

\begin{lemma}[Kominers]\label{lem:compose}
If $M$ and $N$ are Euler magic matrices of orders $m,n$ with constants
$\gamma_M,\gamma_N$, then the lexicographic Kronecker product $M\otimes N$ is an
Euler magic matrix of order $mn$ with constant $\gamma_M\gamma_N$.
\end{lemma}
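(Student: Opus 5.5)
We verify the three conditions of Definition~\ref{def:euler} for $P=M\otimes N$ one at a time, indexing rows and columns of $P$ lexicographically by pairs $(i,j)$ with $0\le i<m$, $0\le j<n$, so that $(i,j)$ corresponds to $in+j$ and $P_{(i,j),(k,l)}=M_{i,k}N_{j,l}$.

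Orthogonality is the easy step. By the mixed-product rule for Kronecker products,
\[
PP^{\tr}=(M\otimes N)(M^{\tr}\otimes N^{\tr})=(MM^{\tr})\otimes(NN^{\tr})=\gamma_M I_m\otimes\gamma_N I_n=\gamma_M\gamma_N I_{mn}.
\]
This is the same computation as in the proof of Lemma~\ref{lem:orth}, with no reindexing. Since $\gamma_M,\gamma_N\ne0$, the constant $\gamma_M\gamma_N$ is nonzero.

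Next we treat the main diagonal. The diagonal position $in+j$ is exactly the pair $((i,j),(i,j))$, so its entry is $M_{i,i}N_{j,j}$. The main-diagonal square-sum therefore factors as a double sum:
\[
\sum_{i,j}M_{i,i}^2N_{j,j}^2=\Big(\sum_i M_{i,i}^2\Big)\Big(\sum_j N_{j,j}^2\Big)=\gamma_M\gamma_N,
\]
by \eqref{eq:main} for $M$ and for $N$.

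For the antidiagonal we need the key index identity
\[
mn-1-(in+j)=(m-1-i)n+(n-1-j).
\]
This shows that the antidiagonal partner of $(i,j)$ in the lexicographic identification is $(m-1-i,\,n-1-j)$. The entry there is $M_{i,m-1-i}N_{j,n-1-j}$, and the same factorisation, now using \eqref{eq:anti} for $M$ and $N$, gives $\gamma_M\gamma_N$.

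No step poses a real obstacle. The only point needing care is this index bookkeeping: we must check that the lexicographic ordering sends the antidiagonal of $P$ onto the product of the two antidiagonals. The displayed identity settles it, since it exhibits the reflection $x\mapsto mn-1-x$ on $\{0,\dots,mn-1\}$ as the product of the two reflections, one acting on each factor.
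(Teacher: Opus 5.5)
Your proof is correct and follows essentially the same route as the paper: orthogonality comes from the Kronecker mixed-product rule (the paper cites Lemma~\ref{lem:orth} with $L=I$), and both diagonal square-sums factor as products of the factors' diagonal square-sums under the lexicographic identification. Your index identity $mn-1-(in+j)=(m-1-i)n+(n-1-j)$ simply makes explicit the antidiagonal bookkeeping that the paper states without comment.
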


\begin{proof}
Orthogonality is Lemma~\ref{lem:orth} with $L=I$. Under the lexicographic
identification the main diagonal of $M\otimes N$ consists of the entries
$M_{x,x}N_{y,y}$ for $0\le x<m$, $0\le y<n$, so its square-sum is
$\big(\sum_x M_{x,x}^2\big)\big(\sum_y N_{y,y}^2\big)=\gamma_M\gamma_N$;
likewise the antidiagonal consists of the entries
$M_{x,m-1-x}N_{y,n-1-y}$ and its square-sum is $\gamma_M\gamma_N$.
\end{proof}

Kominers states the lemma in order to observe that order $5$ combines with other
orders, and notes that ``keeping such a product proper takes a careful choice of
factors'' \cite[\S5]{Kominers2026}. That is precisely the difficulty addressed in
Section~\ref{sec:proper}.

\begin{remark}\label{rem:notkronlex}
Lemma~\ref{lem:compose} requires both factors to be \emph{Euler magic}, and is
therefore useless for order $3$, where none exist. It is
Theorem~\ref{thm:criterion} that creates the diagonal conditions out of factors
that only satisfy \eqref{eq:orth}; Lemma~\ref{lem:compose} then propagates them.
The two play distinct roles and neither substitutes for the other.
\end{remark}

\section{The first proper power-of-three example: order 9}\label{sec:nine}

\begin{theorem}\label{thm:nine}
A proper Euler magic matrix of order $9$ exists. Explicitly, with $A,B$ the
orthogonality factors of Example~\ref{ex:factors} and $L_9$ as in
Corollary~\ref{cor:LL}, the matrix $M_9=M_{L_9}(A,B)$ given by
\[
M_9=\begin{pmatrix}
 2100& 1512&  616& 3525& 2538& 1034& 1200&  864&  352\\
 2200&-3432&  924&-1600& 2496& -672&  850&-1326&  357\\
  690& -115&-2070&  120&  -20& -360&-1560&  260& 4680\\
 1320& -220&-3960& -960&  160& 2880&  510&  -85&-1530\\
 1725& 1242&  506&  300&  216&   88&-3900&-2808&-1144\\
 1400&-2184&  588& 2350&-3666&  987&  800&-1248&  336\\
 1150&-1794&  483&  200& -312&   84&-2600& 4056&-1092\\
  840& -140&-2520& 1410& -235&-4230&  480&  -80&-1440\\
 3300& 2376&  968&-2400&-1728& -704& 1275&  918&  374
\end{pmatrix}
\]
is a proper Euler magic matrix with constant
$\gamma=3249\cdot9025=29\,322\,225=5415^2$. Its entries have greatest common
divisor $1$.
\end{theorem}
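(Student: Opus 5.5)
The plan is to separate the statement into its structural part, which follows from results already proved, and its arithmetic part, which is a finite check on the explicit matrix.

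\emph{Step 1: the displayed matrix is $M_{L_9}(A,B)$.} With $L_9=\begin{pmatrix}1&1\\2&1\end{pmatrix}$, row $v=(v_1,v_2)$ of $M_{L_9}(A,B)$ is row $((L_9v)_1,(L_9v)_2)=(v_1+v_2,\,2v_1+v_2)$ of $A\otimes B$. So row $v$ is the Kronecker product of row $v_1+v_2$ of $A$ with row $2v_1+v_2$ of $B$. I would tabulate the nine pairs $(Lv)$ and compare each displayed row with the corresponding Kronecker row. For example, $v=0$ gives row $(0,0)$, namely $28\cdot(75,54,22)$ followed by $47\cdot(\dots)$ and $16\cdot(\dots)$, which reproduces $2100,1512,616,3525,\dots$. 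Row index $1$ is $v=(0,1)\mapsto(1,1)$, giving $44\cdot(50,-78,21)=(2200,-3432,924)$, as displayed.

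\emph{Step 2: the Euler magic conditions.} Since $A,B$ are orthogonality factors with $\gamma_A=3249$ and $\gamma_B=9025$ (Example~\ref{ex:factors}), Corollary~\ref{cor:LL} shows at once that $M_9$ is Euler magic with constant $\gamma_A\gamma_B=29\,322\,225=5415^2$. Here I would briefly confirm $AA^{\tr}=3249I$ and $BB^{\tr}=9025I$ by the row dot products, or by appealing to the quaternion parametrisation of Lemma~\ref{lem:quaternion}.

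\emph{Step 3: properness.} The entries of $M_9$ are all the products $A_{x,y}B_{x',y'}$, each occurring exactly once. So the entry squares are pairwise distinct exactly when the $81$ products $a(x,y)\,b(x',y')$ of entry squares are pairwise distinct. Equivalently, the multiset of absolute products $|A_{x,y}|\cdot|B_{x',y'}|$ must have no repeats. I expect this to be the main obstacle, since it is genuinely arithmetic and has no structural reason behind it. I would attack it by comparing factorisations:
\[
A\text{-values } \{28,47,16,44,32,17,23,4,52\},\qquad B\text{-values } \{75,54,22,50,78,21,30,5,90\}.
\]
A coincidence $\alpha\beta=\alpha'\beta'$ requires $\alpha/\alpha'=\beta'/\beta$. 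So I would list the ratios among $A$-values in lowest terms, and likewise for $B$-values, and check that the two ratio sets are disjoint apart from $1$. The prime $47$ occurs in no $B$-value, and $17$ and $23$ behave similarly, which prunes most cases. The remaining $A$-values $28,16,44,32,4,52$ only involve $2,7,11,13$. Their nontrivial ratios involve $7$, $11$, $13$ or powers of $2$, whereas $B$-ratios involve $3$ and $5$ heavily. The only shared small prime is $2$, and its powers (with $7,11,13$ appearing in $B$ only through $7\mid21$, $11\mid22$, $13\mid78$) need a short finite check. Failing an elegant argument, I would simply sort the $81$ products by exact integer arithmetic.

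\emph{Step 4: the gcd.} It suffices to exhibit coprime entries. For instance, the entries $357=3\cdot7\cdot17$ and $-20$ have $\gcd(357,20)=1$, which settles the claim.
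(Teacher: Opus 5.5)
Your proposal is correct and is essentially the paper's proof. You identify the displayed matrix with $M_{L_9}(A,B)$ via $L_9v=(v_1+v_2,\,2v_1+v_2)$, take the Euler magic property from Corollary~\ref{cor:LL}, and reduce properness to distinctness of the $81$ products of entry squares by Proposition~\ref{prop:properness}. The paper asserts that distinctness from computation and never addresses the gcd, so your $\gcd(357,20)=1$ fills a small omission. Your ratio sketch also closes by hand: after discarding $47,17,23$, the $3$- and $5$-adic valuations separate every pair of $B$-values except $21,78$, and $\alpha\cdot 78=\alpha'\cdot 21$ would force $\alpha=28$ and $\alpha'=104$, which is not an absolute entry of $A$.
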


\begin{proof}
$A$ and $B$ are orthogonality factors with constants $3249$ and $9025$
(Example~\ref{ex:factors}), and $3249\cdot9025\ne0$, so
Corollary~\ref{cor:LL} applies and $M_9$ is Euler magic with the stated
constant. Properness is Proposition~\ref{prop:properness} below applied to
the two factors, whose nine entry squares are respectively
\[
\{16,256,289,529,784,1024,1936,2209,2704\},\quad
\{25,441,484,900,2500,2916,5625,6084,8100\},
\]
each of size nine, with all $81$ pairwise products distinct. The displayed
matrix is $M_{L_9}(A,B)$: for a row index $v=(v_1,v_2)$ and column index
$w=(w_1,w_2)$ in base three, the entry is
$A_{(v_1+v_2),\,w_1}\,B_{(2v_1+v_2),\,w_2}$ with indices read modulo $3$.
\end{proof}

\begin{remark}\label{rem:nine}
Theorem~\ref{thm:nine} is not obtained by tensoring two Euler magic matrices of
order $3$: by \cite[Theorem~1.2]{Muller2026} there are none. Nor is it the plain
Kronecker product of its factors, which fails both diagonal conditions
(Example~\ref{ex:necessary}). The Euler conditions in order $9$ are produced by
the reindexing, from factors that individually violate them
(Example~\ref{ex:factors}).

Order $9$ is the smallest power of three settled by the $\FF_3$ construction used
here. It is not the smallest order the reindexing mechanism reaches at all: by
Proposition~\ref{prop:whythree} the case $p=2$ is also available, giving order
$4$, where a proper example is classical. Nor do we claim order $9$ is the
smallest order in which the existence of a proper Euler magic matrix was open ---
order $7$, for instance, is odd and greater than $5$ and is not addressed here.
\end{remark}

Order $9$ is small enough that Theorem~\ref{thm:nine} can be checked directly:
the reader need only verify that the nine rows of $M_9$ are pairwise orthogonal
with common square-sum $29\,322\,225$, that the two diagonals have the same
square-sum, and that the $81$ entry squares are distinct.

\section{Proper examples of orders 27, 81 and 243}\label{sec:tower}

\begin{theorem}\label{thm:tower}
Proper Euler magic matrices exist in orders $9$, $27$, $81$ and $243$, with the
constants listed in Table~\ref{tab:tower}.
\end{theorem}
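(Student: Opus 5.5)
Order $9$ is Theorem~\ref{thm:nine}, so the plan is to build orders $27$, $81$ and $243$ the same way: write $3^k$ as a sum of blocks of sizes $2$ and $3$, namely $27=3^3$, $81=3^{2+2}$ and $243=3^{2+3}$. Take the reindexing $L_{27}$ of Corollary~\ref{cor:LL} for $k=3$, $L_9\oplus L_9$ for $k=4$, and $L_9\oplus L_{27}$ for $k=5$. By Corollary~\ref{cor:LL} and the second half of Lemma~\ref{lem:dsum}, each of these satisfies \eqref{eq:star}. Then, for any orthogonality factors $A_1,\dots,A_k$ of order $3$ with nonzero constants, Theorem~\ref{thm:criterion} together with Lemma~\ref{lem:orth} makes $M_L(A_1,\dots,A_k)$ an Euler magic matrix of order $3^k$ with constant $\prod_i\gamma_i$. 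The first half of Lemma~\ref{lem:dsum} also identifies $M_{L_9\oplus L_9}$ and $M_{L_9\oplus L_{27}}$ with Kronecker products of the order-$9$ and order-$27$ matrices. This gives an independent route through Lemma~\ref{lem:compose}, which is a useful cross-check on the computations.

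The Euler conditions are therefore automatic, and only properness needs work. Every entry of $M_L(A_1,\dots,A_k)$ has the form $\prod_i (A_i)_{x_i,y_i}$. As $(v,w)$ runs over all positions, the tuple $((Lv)_i,w_i)_i$ runs bijectively over $(\FF_3^2)^k$, because $L$ is invertible. So the entry squares of $M$ are exactly the products $\prod_i a_i(x_i,y_i)$, one for each tuple of cells. Properness of $M$ is thus equivalent to the following condition: each $a_i$ has nine distinct values, and the $9^k$ products obtained by choosing one value from each $a_i$ are pairwise distinct. This is what Proposition~\ref{prop:properness} should state, and I will use it in that form.

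Next, choose $k\le5$ order-$3$ factors from the quaternion parametrisation of Lemma~\ref{lem:quaternion}, each with nine distinct entry squares. Pick quaternions whose norms are products of distinct primes, with disjoint prime sets across the factors, and reuse $A$ and $B$ from Example~\ref{ex:factors} as two of the factors. For each candidate tuple, check distinctness of the $9^k$ products by exact integer arithmetic: sort the products and compare neighbours. At $9^5\approx 59{,}000$ values this is trivial. Tabulate the constants $\prod_i\gamma_i$, which are squares of products of the quaternion norms, as in Table~\ref{tab:tower}.

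The main obstacle is this properness search. A collision $\prod_i a_i(x_i,y_i)=\prod_i a_i(x_i',y_i')$ is a multiplicative relation among entry squares of different factors, and small entries share small primes such as $2,3,5$, so coincidences are likely. I would first try to rule collisions out structurally by taking logarithms and seeking factors whose entry squares have prime factorisations that separate, for example by giving each factor a distinguishing large prime pattern. Failing that, I would do a greedy computational search: add factors one at a time, keeping the product set collision-free, and verify the final witnesses exactly.
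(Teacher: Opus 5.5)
\textbf{There is a genuine gap: you never produce the factors.} Your reduction is correct and is the paper's own. The reindexings $L_{27}$, $L_9\oplus L_9$ and $L_9\oplus L_{27}$ (the paper uses $L_{27}\oplus L_9$; either works by Lemma~\ref{lem:dsum}) satisfy \eqref{eq:star}. Theorem~\ref{thm:criterion} then gives the Euler conditions, and Proposition~\ref{prop:properness} turns properness into distinctness of the $9^k$ products of entry squares.

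After that reduction, the only remaining content of Theorem~\ref{thm:tower} is that order-$3$ orthogonality factors with pairwise distinct $9^k$ products \emph{exist}. A plan to search for them, by sorting and comparing neighbours or by a greedy fallback, is a method for finding a witness, not a proof that one exists. Nothing in your argument rules out that every candidate tuple has a collision. The statement also asserts the specific constants of Table~\ref{tab:tower}, and only specific factors deliver those.

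The paper's proof consists of supplying exactly this data. It gives explicit factors for order $27$ (constants $5041,9801,11025$), order $81$ ($11025,17161,21609,29241$) and order $243$ ($3249,9025,21609,53361,199809$). It checks $A_iA_i^{\tr}=\gamma_iI$ for each, and confirms by exact arithmetic that the $9^3$, $9^4$ and $9^5$ products are pairwise distinct.

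\textbf{Your steering heuristic would not close the gap.} It conflicts with your own choice: $A$ and $B$ of Example~\ref{ex:factors} come from quaternions of norms $57=3\cdot19$ and $95=5\cdot19$, which share the prime $19$. It also cannot reproduce the table. For example, in order $27$ the norms $99=3^2\cdot11$ and $105=3\cdot5\cdot7$ share $3$, and $99$ is not squarefree.

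More fundamentally, collisions $\prod_i a_i(x_i,y_i)=\prod_i a_i(x_i',y_i')$ are multiplicative relations among the \emph{entries}, and the norm's factorisation does not control those. The entries of $A$ are $4,16,17,23,28,32,44,47,52$. They involve only the primes $2,7,11,13,17,23,47$, none of which divides $57$. Meanwhile all six off-diagonal entries of $R(w,x,y,z)$ are even, so the prime $2$ is shared by every factor whatever its norm.

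A structural route would need a condition on the entries themselves, for instance that each entry of each factor carries a prime dividing no other entry of any factor. You have not stated or achieved any such condition. The paper also records in Section~\ref{sec:open} that the natural alternative, scale separation, fails. As written, the properness step, which is the actual substance of the theorem, is left open.
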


\begin{table}[ht]
\caption{The four witnesses. Each is built from $k$ orthogonality factors of
order $3$ by the construction of Section~\ref{sec:construction}; every constant
is a perfect square.}
\label{tab:tower}
\begin{tabular}{@{}rlll@{}}
\toprule
Order & $k$ & Factor constants $\gamma_i$ & Constant $\gamma=\prod_i\gamma_i$\\
\midrule
$9$   & $2$ & $3249,\;9025$ & $29\,322\,225=5415^2$\\
$27$  & $3$ & $5041,\;9801,\;11025$ & $544\,710\,422\,025=738045^2$\\
$81$  & $4$ & $11025,\;17161,\;21609,\;29241$ &
        $119\,549\,586\,891\,519\,225=345759435^2$\\
$243$ & $5$ & $3249,\;9025,\;21609,\;53361,\;199809$ &
        $6\,755\,703\,761\,825\,804\,241\,225=82193088285^2$\\
\bottomrule
\end{tabular}
\end{table}

\begin{proof}
Order $9$ is Theorem~\ref{thm:nine}.

Order $27$ is $M_{L_{27}}(A,B,C)$ for the three orthogonality factors
\[
A=\begin{pmatrix}51&46&18\\42&-54&19\\26&-3&-66\end{pmatrix},\
B=\begin{pmatrix}31&86&38\\74&-47&46\\58&14&-79\end{pmatrix},\
C=\begin{pmatrix}-1&100&32\\80&-20&65\\68&25&-76\end{pmatrix}
\]
with constants $5041,9801,11025$; Corollary~\ref{cor:LL} gives the Euler magic
property and Proposition~\ref{prop:properness} the properness.

Order $81$ is $M'\otimes M''$ where $M'=M_{L_9}(A',B')$ and
$M''=M_{L_9}(C',D')$ are order-$9$ matrices built from
\[
\begin{gathered}
A'=\begin{pmatrix}92&44&25\\40&-95&20\\31&-8&-100\end{pmatrix},\quad
B'=\begin{pmatrix}66&102&49\\94&-81&42\\63&14&-114\end{pmatrix},\\[4pt]
C'=\begin{pmatrix}-2&127&74\\113&-46&82\\94&58&-97\end{pmatrix},\quad
D'=\begin{pmatrix}26&134&103\\118&-89&86\\121&58&-106\end{pmatrix}
\end{gathered}
\]
with constants $11025,17161,21609,29241$. Each of $M',M''$ is Euler magic by
Corollary~\ref{cor:LL}, and Lemma~\ref{lem:compose} applies. By
Lemma~\ref{lem:dsum} this matrix is $M_{L_9\oplus L_9}(A',B',C',D')$, a single
reindexed tensor product with $L_9\oplus L_9\in GL_4(\FF_3)$, so
Proposition~\ref{prop:properness} applies to it directly and reduces properness
to the distinctness of the $9^4$ products of factor entry squares.

Order $243$ is $M_{L_{27}}(A'',B'',C'')\otimes M_{L_9}(D'',E'')$, of order
$27\cdot9=243$, built from
\[
\begin{gathered}
A''=\begin{pmatrix}28&47&16\\44&-32&17\\23&4&-52\end{pmatrix},\
B''=\begin{pmatrix}75&54&22\\50&-78&21\\30&-5&-90\end{pmatrix},\
C''=\begin{pmatrix}79&122&22\\118&-82&31\\38&1&-142\end{pmatrix},\\[2pt]
D''=\begin{pmatrix}163&146&74\\134&-179&58\\94&2&-211\end{pmatrix},\
E''=\begin{pmatrix}-53&382&226\\346&-107&262\\278&206&-283\end{pmatrix}
\end{gathered}
\]
with constants $3249,9025,21609,53361,199809$. Again
Corollary~\ref{cor:LL} and Lemma~\ref{lem:compose} give the Euler magic
property; by Lemma~\ref{lem:dsum} the matrix is
$M_{L_{27}\oplus L_9}(A'',B'',C'',D'',E'')$ with
$L_{27}\oplus L_9\in GL_5(\FF_3)$, so Proposition~\ref{prop:properness} again
applies directly and gives properness from the $9^5$ distinct products.
\end{proof}

\begin{remark}
By Lemma~\ref{lem:dsum} all four witnesses are $L$-reindexed tensor products
$M_L(A_1,\dots,A_k)$ for a single $L\in GL_k(\FF_3)$ satisfying \eqref{eq:star},
namely $L_9$, $L_{27}$, $L_9\oplus L_9$ and $L_{27}\oplus L_9$. Their Euler
magic property therefore also follows uniformly from
Theorem~\ref{thm:criterion}, and their properness uniformly from
Proposition~\ref{prop:properness}; the two-step description above is how they
were built and how they are verified in Lean.
\end{remark}

The general existence statement behind the table is the following. It concerns
\emph{Euler magic} matrices, not proper ones.

\begin{theorem}\label{thm:general}
Let $k\ge2$ and let $A_1,\dots,A_k\in\ZZ^{3\times3}$ be orthogonality factors
with constants $\gamma_1,\dots,\gamma_k$ such that $\gamma=\prod_i\gamma_i\ne0$.
Then there is an Euler magic matrix of order $3^k$ with constant $\gamma$.
\end{theorem}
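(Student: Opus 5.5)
The plan is to reduce Theorem~\ref{thm:general} to Theorem~\ref{thm:criterion}. The only missing ingredient is a single $L\in GL_k(\FF_3)$ satisfying \eqref{eq:star} for every $k\ge2$. Once such an $L$ is in hand, $M_L(A_1,\dots,A_k)$ is Euler magic of order $3^k$ with constant $\gamma=\prod_i\gamma_i$. This uses only the hypothesis that each $A_i$ is an orthogonality factor and that $\gamma\ne0$, exactly as in Theorem~\ref{thm:criterion}.

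To produce $L$, write $k=2a+3b$ with $a,b\ge0$. This is possible for every $k\ge2$: take $b=0$ when $k$ is even and $b=1$ when $k$ is odd, which needs $k\ge3$, and that holds because $k$ odd and $k\ge2$. Then set
\[
L=\underbrace{L_9\oplus\cdots\oplus L_9}_{a}\oplus\underbrace{L_{27}\oplus\cdots\oplus L_{27}}_{b}\in GL_k(\FF_3).
\]
Corollary~\ref{cor:LL} says that $L_9$ and $L_{27}$ satisfy \eqref{eq:star}. The second part of Lemma~\ref{lem:dsum}, applied inductively on the number of blocks, shows that a block-diagonal sum satisfies \eqref{eq:star} if and only if each block does. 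Hence $L$ satisfies \eqref{eq:star}, and $L$ is invertible as a block sum of invertible maps.

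As an alternative, and as a consistency check, one can avoid Theorem~\ref{thm:criterion} for general $k$. Build $M_{L_9}$ and $M_{L_{27}}$ from consecutive groups of the factors; these are Euler magic by Corollary~\ref{cor:LL}. Then combine them by Lemma~\ref{lem:compose}, inducting on the number of blocks. The constant multiplies to $\prod_i\gamma_i$. By \eqref{eq:dsum} this is literally the same matrix as $M_L(A_1,\dots,A_k)$.

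There is no real obstacle here. The only point needing care is the arithmetic fact that every $k\ge2$ is a nonnegative combination of $2$ and $3$, which is exactly why $k=1$ is excluded, consistent with the nonexistence in order $3$. A secondary check is that every factor group is assigned to a block of the right size, so that $\gamma$ is the full product $\prod_i\gamma_i$ and not a partial one.
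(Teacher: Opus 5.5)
Your proposal is correct, but your main route differs from the paper's; your ``alternative'' is essentially the paper's proof. The paper argues by strong induction on $k$, with base cases $k=2,3$ from Corollary~\ref{cor:LL}. It peels off $M_{L_9}(A_{k-1},A_k)$ and glues it to the order-$3^{k-2}$ matrix with Lemma~\ref{lem:compose}. It places the $L_{27}$ block first rather than last, which is immaterial.

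Your main argument instead applies Theorem~\ref{thm:criterion} for general $k$ to the single map $L=L_9^{\oplus a}\oplus L_{27}^{\oplus b}$, after checking \eqref{eq:star} with the second half of Lemma~\ref{lem:dsum}. Your route exhibits the witness directly as one reindexed product $M_L(A_1,\dots,A_k)$, so Proposition~\ref{prop:properness} applies to it with no further identification. This is exactly the argument the paper itself uses in Section~\ref{sec:computational} for orders $729$ and $2187$.

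The paper's route gains economy of dependencies. It needs only the two instances $L_9$, $L_{27}$ of the criterion and the elementary composition lemma. It never uses the character computation for general $k$, nor the \eqref{eq:star}-preservation half of Lemma~\ref{lem:dsum}. That is why Theorem~\ref{thm:general} is listed as kernel-verified in Section~\ref{sec:verification}, while Theorem~\ref{thm:criterion} for general $k$ and $L$ is only a paper proof.

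In either route, every partial constant is a product of the nonzero $\gamma_i$ and hence nonzero, as Corollary~\ref{cor:LL} requires.
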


\begin{proof}
By strong induction on $k$, with the two base cases $k=2$ and $k=3$; since the
inductive step reduces $k$ to $k-2$, the two bases cover both parities.

For $k=2$ and $k=3$ apply Corollary~\ref{cor:LL} to $M_{L_9}(A_1,A_2)$ and
$M_{L_{27}}(A_1,A_2,A_3)$. For $k\ge4$, note $2\le k-2$, so the inductive
hypothesis gives an Euler magic matrix of order $3^{k-2}$ with constant
$\prod_{i\le k-2}\gamma_i$ (nonzero, being a factor of $\gamma$); and
$M_{L_9}(A_{k-1},A_k)$ is Euler magic of order $9$ with constant
$\gamma_{k-1}\gamma_k$. Lemma~\ref{lem:compose} combines them into one of order
$3^{k-2}\cdot9=3^k$ with constant $\gamma$.
\end{proof}

\begin{remark}
Theorem~\ref{thm:general} does not improve on
\cite[Theorem~4.1]{Muller2026}, which already gives an Euler magic matrix in
every order $n\ne3$ and in particular in every order $3^k$. Its interest is
that it produces one with a prescribed constant $\prod_i\gamma_i$ from
prescribed factors, which is what makes properness accessible: properness is a
condition on the factors, addressed next. Theorem~\ref{thm:general} by itself
says nothing about properness.
\end{remark}

\section{Properness and the distinct-product computation}\label{sec:proper}

Properness of the reindexed product reduces to an arithmetic condition on the
factors. The reduction is elementary; the content is in finding factors that
satisfy it.

\begin{proposition}\label{prop:properness}
Let $L\in GL_k(\FF_3)$ and let $A_1,\dots,A_k\in\ZZ^{3\times3}$ have
entry-square arrays $a_1,\dots,a_k$. Then $M_L(A_1,\dots,A_k)$ is proper if and
only if the $9^k$ products
\[
\prod_{i=1}^{k} a_i(x_i,y_i),
\qquad (x_1,y_1,\dots,x_k,y_k)\in(\FF_3\times\FF_3)^k,
\]
are pairwise distinct. In particular each $A_i$ must itself have nine pairwise
distinct entry squares.
\end{proposition}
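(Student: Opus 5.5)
The plan is to exhibit the entry squares of $M=M_L(A_1,\dots,A_k)$ as exactly the $9^k$ listed products, each appearing once. Then properness of $M$ is the same statement as the products being pairwise distinct.

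By Definition~\ref{def:kronlex},
\[
M_{v,w}^2=\prod_{i=1}^k a_i\big((Lv)_i,w_i\big), \qquad v,w\in\FF_3^k .
\]
Consider the map
\[
\Phi:\FF_3^k\times\FF_3^k\to(\FF_3\times\FF_3)^k,\qquad (v,w)\mapsto\big(((Lv)_1,w_1),\dots,((Lv)_k,w_k)\big).
\]
Since $L$ is invertible, $v\mapsto Lv$ is a bijection of $\FF_3^k$, so $\Phi$ is a bijection between two sets of size $9^k$. The inverse sends $(x_i,y_i)_i$ to $(L^{-1}x,y)$.

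Hence the family of entry squares $(M_{v,w}^2)_{(v,w)}$ is a reindexing, via $\Phi$, of the family of products $\big(\prod_i a_i(x_i,y_i)\big)$. One family is injective exactly when the other is. This gives the equivalence. Note that no orthogonality hypothesis is needed; only the invertibility of $L$ is used.

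For the final assertion, assume the products are distinct and fix $j$. Suppose $a_j(x,y)=a_j(x',y')$ for some $(x,y)\ne(x',y')$. Fix the other coordinates at any values, say $(x_i,y_i)=(0,0)$ for $i\ne j$. The two resulting tuples are different but give equal products, which contradicts distinctness.

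So each $A_j$ has nine pairwise distinct entry squares. The argument needs no case for vanishing factors: if some factor entry square were zero, two different tuples would both give product zero, and the contradiction would still hold.

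The only step needing any care is the bijectivity of $\Phi$, and it is immediate from $L\in GL_k(\FF_3)$. I therefore expect no real obstacle. The substance lies elsewhere, in the computational search for factors whose products are distinct.
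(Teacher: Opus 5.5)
Your proof is correct and takes the same route as the paper. Both identify $M_{v,w}^2=\prod_i a_i((Lv)_i,w_i)$ and use invertibility of $L$ to see that $(v,w)\mapsto((Lv)_i,w_i)_i$ is a bijection onto $(\FF_3\times\FF_3)^k$, so injectivity transfers between the two families; the final claim then follows by fixing all but one coordinate pair.
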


\begin{proof}
The entries of $M_L$ are indexed by pairs $(v,w)\in\FF_3^k\times\FF_3^k$, and
$M_{L}(A_1,\dots,A_k)_{v,w}^2=\prod_i a_i((Lv)_i,w_i)$. The map
$(v,w)\mapsto((Lv)_1,w_1,\dots,(Lv)_k,w_k)$ is a bijection from
$\FF_3^k\times\FF_3^k$ onto $(\FF_3\times\FF_3)^k$, because $L$ is invertible.
So the multiset of entry squares of $M_L$ is exactly the multiset of the $9^k$
displayed products, and one is multiplicity-free precisely when the other is.
The final statement follows by fixing all coordinates but the $i$-th.
\end{proof}

\begin{remark}
Proposition~\ref{prop:properness} is a restatement of injectivity under the
product indexing, not a characterisation of which factor tuples exist. It is
independent of $L$: properness is insensitive to the reindexing, whereas the
diagonal conditions are exactly what the reindexing supplies. The substantive
work is exhibiting tuples of orthogonality factors whose entry squares have
pairwise distinct products, and that is a search, carried out for the tuples in
Table~\ref{tab:tower}.
\end{remark}

The condition is restrictive in practice. The nine entry squares of a
$3\times3$ orthogonality factor are far from free: by
Lemma~\ref{lem:quaternion} the six off-diagonal entries come in the three pairs
$2(xy\mp wz)$, $2(xz\pm wy)$, $2(yz\mp wx)$, so they are strongly coupled. In a
search over all factors arising from quaternions with $|w|,|x|,|y|,|z|\le28$,
the largest achievable ratio between the closest pair of entry squares was only
about $1.26$. This is what makes the tuples in Table~\ref{tab:tower} require
search rather than construction, and it is relevant to the open problem in
Section~\ref{sec:open}.

\section{Computational extensions to orders 729 and 2187}
\label{sec:computational}

The same arithmetic construction yields proper examples in two further orders.

\begin{proposition}[exact computation]\label{prop:computational}
There exist tuples of six and seven orthogonality factors of order $3$
satisfying the distinct-product condition of
Proposition~\ref{prop:properness}, giving proper Euler magic matrices of orders
$729=3^6$ and $2187=3^7$ with constants
\begin{align*}
\gamma_{729}&=1\,383\,621\,370\,577\,137\,212\,748\,452\,225,\\
\gamma_{2187}&=1\,374\,777\,097\,726\,234\,503\,832\,371\,133\,749\,225 .
\end{align*}
\end{proposition}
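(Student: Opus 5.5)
The plan is to reduce the statement to three independent pieces, two of them already proved in the paper. First, fix the reindexing. For $k=6$ take $L_{729}=L_9\oplus L_9\oplus L_9$ (or $L_{27}\oplus L_{27}$). For $k=7$ take $L_{2187}=L_{27}\oplus L_9\oplus L_9$. By Corollary~\ref{cor:LL} and Lemma~\ref{lem:dsum}, each of these satisfies \eqref{eq:star}. Hence for any six (resp.\ seven) orthogonality factors $A_1,\dots,A_k$ with nonzero constants, Theorem~\ref{thm:criterion} makes $M_L(A_1,\dots,A_k)$ Euler magic with constant $\prod_i\gamma_i$. Equivalently, by Lemma~\ref{lem:compose}, it is the Kronecker product of the corresponding order-$9$ and order-$27$ blocks. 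Second, properness is reduced by Proposition~\ref{prop:properness} to the distinctness of the $9^k$ products of factor entry squares, and this condition is independent of $L$. So the whole statement comes down to exhibiting explicit tuples of integer $3\times3$ matrices and checking two finite arithmetic facts.

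The witness tuples will be produced by search over quaternions via Lemma~\ref{lem:quaternion}. For each $(w,x,y,z)$ in a box, form $R(w,x,y,z)$ and keep it only if its nine entry squares are distinct. Each retained factor is summarised by the multiset of the logarithms of its nine entry squares. I would then build the tuple greedily, extending the tuple used for order $243$ in Table~\ref{tab:tower} by one and then two further factors of larger norm. At each step I would maintain the sorted set $P_j$ of all $9^j$ products for the first $j$ factors. A candidate $A_{j+1}$ is accepted exactly when $P_j\cdot\{a_{j+1}(x,y)\}$ has $9^{j+1}$ distinct elements. The stated constants $\gamma_{729}$ and $\gamma_{2187}$ are then the products of the chosen factor constants, and both must be perfect squares by Lemma~\ref{lem:quaternion}.

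Verification is by exact integer arithmetic. For each factor, check $A_iA_i^{\tr}=\gamma_iI$ directly. Then sort the $9^6=531441$ and $9^7=4782969$ integer products and scan for adjacent equal elements, which is cheap and certifies the distinct-product condition. Finally, recompute $\prod_i\gamma_i$ and compare it with the displayed values. As a redundant check, I would also assemble $M_L$ and evaluate the two diagonal square-sums and a sample of the row inner products.

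The main obstacle is the search step. Products of squares can coincide through multiplicative relations among the entries of different factors, and the entry squares within a factor are tightly clustered (the ratio bound near $1.26$ noted in Section~\ref{sec:proper}). A new factor therefore collides with the existing $9^j$ products only when some ratio $a_{j+1}(x,y)/a_{j+1}(x',y')$ equals a ratio of two existing products. I would first choose each new factor's norm $N$ with a prime divisor not occurring in any earlier factor's entries. I would also favour entries whose prime factorisations are disjoint from those of earlier factors, which rules out most such ratio coincidences and should make the greedy extension succeed quickly. If it does not, I would fall back to a randomised search over larger quaternion boxes.
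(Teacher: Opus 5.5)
Your reduction is the paper's. The matrices are single reindexed products $M_L$ with $L=L_9\oplus L_9\oplus L_9$ and $L=L_{27}\oplus L_9\oplus L_9$. Lemma~\ref{lem:dsum} transfers \eqref{eq:star} from the summands, and Theorem~\ref{thm:criterion} gives the Euler magic property with constant $\prod_i\gamma_i$. Proposition~\ref{prop:properness} converts distinct products into properness, and your sort-and-scan check is the exact integer verification the paper performs. (The identification with a Kronecker product of blocks is Lemma~\ref{lem:dsum}; Lemma~\ref{lem:compose} is what makes that product Euler magic.)

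The gap is the witness. Once the reduction is in place, the content of the statement is the explicit data, which the paper prints in Appendix~\ref{app:tuples}. You offer instead a heuristic search with no guarantee of success. More seriously, your seeding cannot produce the claimed constants. The order-$243$ tuple contains $E''$, whose constant is $199809=447^2=3^2\cdot149^2$, so every extension of that tuple has $149\mid\prod_i\gamma_i$. But
\[
\sqrt{\gamma_{729}}=57\cdot95\cdot147\cdot231\cdot399\cdot507,\qquad
\sqrt{\gamma_{2187}}=57\cdot95\cdot147\cdot231\cdot399\cdot699\cdot723,
\]
and none of these factors is divisible by $149$. So your assertion that ``the stated constants are then the products of the chosen factor constants'' fails for every tuple your procedure can return. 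Even if the search succeeds, you would only get proper examples of orders $729$ and $2187$ with \emph{some} constants, which is weaker than the statement.

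To close the gap, drop the search and run your verification on the printed tuples. They are not nested the way you assumed. The order-$729$ tuple has constants $57^2,95^2,147^2,231^2,399^2,507^2$, so its fifth constant is $399^2$, not $447^2$. The order-$2187$ tuple keeps the first five order-$729$ factors but ends with constants $699^2,723^2$ in place of $507^2$. With that data, your remaining checks finish the proof exactly as in the paper: entrywise orthogonality of each factor, recomputation of $\prod_i\gamma_i$, and distinctness of the $9^6$ and $9^7$ products.
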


The two factor tuples are printed in full in Appendix~\ref{app:tuples}, so the
proposition is checkable from the paper alone. Both cases were checked by exact
integer arithmetic: for each tuple the orthogonality equation
$A_iA_i^{\tr}=\gamma_i I$ was verified entrywise, and the $9^6=531\,441$ and
$9^7=4\,782\,969$ products of entry squares were confirmed pairwise distinct.

To see that this suffices, note that the matrices in question are single
reindexed tensor products. Applying Lemma~\ref{lem:dsum} twice,
\[
M_{L_9\oplus L_9\oplus L_9}(A_1,\dots,A_6)
=M_{L_9}(A_1,A_2)\otimes M_{L_9}(A_3,A_4)\otimes M_{L_9}(A_5,A_6)
\]
in order $729$, and likewise
$M_{L_{27}\oplus L_9\oplus L_9}(A_1,\dots,A_7)$ in order $2187$; these are the
matrices produced by the induction of Theorem~\ref{thm:general} for $k=6$ and
$k=7$. By Lemma~\ref{lem:dsum} again, $L_9\oplus L_9\oplus L_9\in GL_6(\FF_3)$
and $L_{27}\oplus L_9\oplus L_9\in GL_7(\FF_3)$ satisfy \eqref{eq:star}, since
each summand does. So Theorem~\ref{thm:criterion} gives the Euler magic property
with constant $\prod_i\gamma_i$, and Proposition~\ref{prop:properness} --- which
applies to $M_L$ for any $L\in GL_k(\FF_3)$ --- converts the verified
distinct-product statement into properness. No further data is needed.

These two orders have \emph{not} been incorporated into the machine-checked
theorem set of Section~\ref{sec:verification}, and we distinguish them from the
four orders of Theorem~\ref{thm:tower} on that basis.

We emphasise what this does and does not show. It is evidence that the
distinct-product condition remains satisfiable as $k$ grows, and the condition
was verified exactly for every $k$ from $2$ to $7$. It is not a proof that the
condition is satisfiable for all $k$, and we do not claim proper Euler magic
matrices in every order $3^k$.

\section{Formal verification and reproducibility}\label{sec:verification}

Part of the development above has been formalised in Lean~4 against
Definition~\ref{def:euler} transcribed directly from \cite{Muller2026}. The two
Lean predicates are
\begin{align*}
\texttt{IsEulerMagic}\ M\ \gamma \;&:=\;
  \gamma\ne0 \,\wedge\, M M^{\tr}=\gamma\cdot 1 \,\wedge\,
  \textstyle\sum_i M_{i,i}^2=\gamma \,\wedge\,
  \textstyle\sum_i M_{i,\mathrm{rev}(i)}^2=\gamma,\\
\texttt{IsProper}\ M \;&:=\;
  \text{\texttt{Function.Injective} } \big(p\mapsto (M_{p_1,p_2})^2\big),
\end{align*}
which are Definition~\ref{def:euler} verbatim ($\mathrm{rev}$ is the
order-reversing involution of $\{0,\dots,n-1\}$, so the second sum is the
antidiagonal). Each of the four existence results has the shape
\begin{align*}
\texttt{exists\_proper\_eulerMagic\_}n \;:\;
&\exists\,(M:\texttt{Matrix (Fin }n\texttt{) (Fin }n\texttt{) }\ZZ)\,
  (\gamma:\ZZ),\\[-2pt]
&\qquad \texttt{IsEulerMagic}\ M\ \gamma \,\wedge\, \texttt{IsProper}\ M,
\end{align*}
so properness is part of what is formally established, not an auxiliary remark.
Not everything in this paper is formalised, and the table below states for each
claim exactly which is which.

All Lean declarations named below live in the namespace
\texttt{EulerSpectrum}.

\begin{center}\footnotesize
\begin{tabular}{@{}p{0.40\textwidth}p{0.24\textwidth}p{0.30\textwidth}@{}}
\toprule
Claim & Status & Lean declaration\\
\midrule
Proper Euler magic, orders $9,27,81$ & kernel-verified &
  \texttt{exists\_proper\_}\-\texttt{eulerMagic\_*}\\[2pt]
Proper Euler magic, order $243$ & kernel-verified \emph{in the archived
  development}; \textbf{not rebuilt here} (witness reproduced by exact integer
  arithmetic) &
  \texttt{Order243.exists\_proper\_}\-\texttt{eulerMagic\_243}\\[2pt]
Thm~\ref{thm:criterion} at $k=2$, $L=L_9$ & kernel-verified &
  \texttt{Order9.}\-\texttt{kron9\_isEulerMagic}\\[2pt]
Thm~\ref{thm:criterion} at $k=3$, $L=L_{27}$ & kernel-verified &
  \texttt{Order27.}\-\texttt{kron27\_isEulerMagic}\\[2pt]
Theorem~\ref{thm:general} & kernel-verified &
  \texttt{exists\_eulerMagic\_}\-\texttt{three\_pow\_prod}\\[2pt]
Lemma~\ref{lem:compose} & kernel-verified &
  \texttt{kronLex\_isEulerMagic\_}\-\texttt{of\_eulerMagic}\\[2pt]
Thm~\ref{thm:criterion}, general $k$ and $L$ & proved here; checked
  computationally & ---\\[2pt]
Proposition~\ref{prop:whythree} & proved here; checked for $p\le13$ & ---\\[2pt]
Proper examples, orders $729,2187$ & exact integer computation & ---\\[2pt]
Proper Euler magic in every order $3^k$ & not proved & ---\\
\bottomrule
\end{tabular}
\end{center}

Two rows of that table deserve comment, since the distinction is easy to lose.
The Lean development verifies the construction at the two \emph{specific}
reindexings $L_9$ and $L_{27}$ used to build the witnesses: the declarations
\texttt{kron9\_isEulerMagic} and \texttt{kron27\_isEulerMagic} take exactly the
hypotheses of Theorem~\ref{thm:criterion} specialised to those maps
($\gamma_A\gamma_B\ne0$ and $AA^{\tr}=\gamma_A I$, $BB^{\tr}=\gamma_B I$, and
similarly for three factors) and conclude the full Euler magic property with
constant the product. Theorem~\ref{thm:criterion} as stated here --- for
arbitrary $k$ and arbitrary $L$ satisfying \eqref{eq:star} --- is \emph{not} a
formalised statement; it is proved in this paper and checked computationally.
Proposition~\ref{prop:whythree} is likewise a paper proof. The existence results
of Theorem~\ref{thm:tower} depend only on the formalised instances.

Every formalised declaration named in the table depends only on the three
standard axioms of Lean's logic, namely \texttt{propext},
\texttt{Classical.choice} and \texttt{Quot.sound}; no additional axiom and no
\texttt{sorry} occurs in the development.

\subsection*{The formal development}

For a referee to check the entries above, the following identify the artefact
exactly.

\begin{center}\footnotesize
\begin{tabular}{@{}p{0.26\textwidth}p{0.66\textwidth}@{}}
\toprule
Lean toolchain & \texttt{leanprover/lean4:v4.28.0}\\
Mathlib & \texttt{leanprover-community/mathlib4}, revision
  \texttt{8f9d9cff6bd728b17a24e163c9402775d9e6a365} (tag \texttt{v4.28.0})\\
Namespace & \texttt{EulerSpectrum}, with sub-namespaces
  \texttt{Order9}, \texttt{Order27}, \texttt{Order81}, \texttt{Order243}\\
Sources & $48$ \texttt{.lean} files under \texttt{work/lean/} ($51$
  \texttt{.lean} files in the archived development overall); the construction is in
  \texttt{EulerOrder9General}, \texttt{EulerOrder27General} and
  \texttt{EulerThreePow}, the witnesses in \texttt{EulerOrder9},
  \texttt{EulerOrder27}, \texttt{EulerOrder81}, \texttt{EulerOrder243}\\
Axiom trace & \texttt{AxiomTrace.lean}, which issues
  \texttt{\#print axioms} for every declaration cited above\\
Archived artefact & \texttt{gzip} archive of the development,
  SHA-256 \texttt{8ee83fd290ec920a3fe16c5563159b76} \texttt{f04841fde1fdc2a22d1acde9333f40a8},
  $1\,874\,496$ bytes, $173$ files\\
Order-$243$ certificate & \texttt{EulerOrder243Cert.lean} together with
  \texttt{EulerOrder243Rows0}--\texttt{Rows26}; requires
  \texttt{--tstack=4000000}\\
\bottomrule
\end{tabular}
\end{center}

The development accompanies this paper in the following public repository:
\begin{center}\small
\url{https://github.com/Bome2017/run004-euler-magic-verification}\\
release \texttt{v1.0.0}, commit\\
\texttt{b9b4821f2078fcd75c7786e760f336c7688ced30}
\end{center}
The independent checker described above is a single self-contained Python~3
script requiring no libraries beyond the standard library, and is included
with the development.

Independently of the formalisation, a checker written from
Definition~\ref{def:euler}, sharing no code with the search that produced the
witnesses and none with the Lean development, rebuilds each witness from its
printed factors and re-verifies it by exact integer arithmetic. Every
computational statement in this paper is bounded, and we give the bounds
explicitly rather than describe the checks as exhaustive without qualification.

\begin{center}\footnotesize
\begin{tabular}{@{}p{0.30\textwidth}p{0.62\textwidth}@{}}
\toprule
Statement & Exact scope of the check\\
\midrule
Witnesses of Thm~\ref{thm:tower} &
  complete: all row norms, all off-diagonal row inner products, all column
  norms, both diagonals, and all $n^2$ entry squares, in exact integer
  arithmetic, for $n=9,27,81,243$\\[2pt]
Distinct products, Table~\ref{tab:tower} &
  complete: all $9^k$ products for $k=2,3,4,5$\\[2pt]
Prop.~\ref{prop:computational} &
  complete: all $9^6=531\,441$ and $9^7=4\,782\,969$ products; tuples in
  Appendix~\ref{app:tuples}\\[2pt]
Thm~\ref{thm:nine} identification &
  complete: $M_{L_9}(A,B)$ rebuilt and compared entrywise with the printed
  matrix\\[2pt]
Thm~\ref{thm:converse}, verification &
  complete for $k=2,3$: $L$ exhaustive over $GL_2(\FF_3)$ ($48$) and
  $GL_3(\FF_3)$ ($11\,232$), and for each $L$ both identities evaluated on all
  $5^k$ tuples from a basis of the rank-$5$ module of semi-magic arrays, which
  by multilinearity decides them for \emph{all} semi-magic arrays. The theorem
  itself is proved for all $k$ and $p$; this is a check, not its evidence\\[2pt]
Thm~\ref{thm:criterion}, cross-check &
  $L_9$ and $L_{27}$ are covered by the exhaustive test above; additionally
  sampled on $500$ random semi-magic tuples each. The theorem itself is proved,
  not sampled\\[2pt]
Prop.~\ref{prop:whythree} &
  $L$ exhaustive over $GL_2(\FF_p)$ for $p\in\{2,3,5,7,11,13\}$; the
  proposition itself is proved for all $p$\\[2pt]
Lemma~\ref{lem:quaternion} &
  $RR^{\tr}=N^2I$ verified for all $|w|,|x|,|y|,|z|\le4$; the identity itself is
  proved\\[2pt]
Separation bound, \S\ref{sec:proper}, \S\ref{sec:open} &
  quaternions with $|w|,|x|,|y|,|z|\le28$ only; a bounded observation, not a
  theorem\\
\bottomrule
\end{tabular}
\end{center}

The fault-injection controls all fire: in particular the plain unreindexed
Kronecker product is confirmed \emph{not} to be Euler magic, as in
Example~\ref{ex:necessary}, and neither is the order-$81$ witness when rebuilt
with $L=I$ in place of $L_9$.

One limitation should be recorded precisely, and it concerns the formalisation
only. The archived Lean development contains a kernel-checked certificate for
order $243$; the environment in which this paper was prepared did not rebuild
that certificate, because its Mathlib build state was absent and the rebuild is
computationally expensive. The claim that the order-$243$ entry of
Table~\ref{tab:tower} is \emph{Lean kernel-verified} is therefore inherited from
the archived development rather than reproduced here. The underlying
mathematical statement is not inherited: the order-$243$ witness was rebuilt
from its five factors and verified against Definition~\ref{def:euler} by exact
integer arithmetic, as described in the previous paragraph. Orders $9$, $27$ and
$81$ are small enough to be rebuilt in Lean directly, and order $9$ can be
checked by hand from Theorem~\ref{thm:nine}.

\section{Open problems}\label{sec:open}

By Theorem~\ref{thm:general} together with
Proposition~\ref{prop:properness}, the existence of a proper Euler magic matrix
of order $3^k$ follows from the existence of $k$ orthogonality factors of order
$3$ whose entry squares have $9^k$ pairwise distinct products. The infinite
question is thus reduced to a question about such tuples.

\begin{problem}\label{prob:main}
Does a proper Euler magic matrix exist in every order $3^k$, $k\ge2$?
\end{problem}

\begin{problem}\label{prob:tuples}
Do there exist, for every $k$, orthogonality factors $A_1,\dots,A_k$ of order
$3$ whose $9^k$ products of entry squares are pairwise distinct?
\end{problem}

An affirmative answer to Problem~\ref{prob:tuples} answers
Problem~\ref{prob:main}. One natural route is scale separation: if the entry
squares of the successive factors occupy multiplicatively well-separated ranges,
distinctness of the products is automatic. We tested this route and it does not
succeed in the form required. In a search over all order-$3$ orthogonality
factors arising from quaternions with $|w|,|x|,|y|,|z|\le28$, the best
achievable separation --- the largest over factors of the minimum ratio between
two entry squares --- was about $1.26$, growing extremely slowly with the size
of the search box (it was already about $1.23$ at bound $16$). The
structural reason is visible in the quaternion parametrisation: the three pairs
$2(xy\pm wz)$, $2(xz\pm wy)$, $2(yz\pm wx)$ are widely separated only when each
nearly cancels, and the three cancellation conditions $xy\approx wz$,
$xz\approx wy$, $yz\approx wx$ force $w,x,y,z$ to be comparable in magnitude, so
separation in all three pairs at once is not available.

This is a bounded computational observation about one strategy, not an
impossibility theorem: it shows that scale separation does not settle
Problem~\ref{prob:tuples}, and says nothing about other constructions. Note also
that the distinct-product condition itself was verified to be satisfiable for
every $k\le7$ (Section~\ref{sec:computational}), so nothing in this analysis
suggests a negative answer.

Finally, the construction here reaches only orders $3^k$ with $k\ge2$.
Proposition~\ref{prop:whythree} rules out the two-factor version of the
reindexing mechanism for primes $p>3$; as noted in Remark~\ref{rem:scope}, it
does not exclude higher-factor reindexings, affine or nonlinear ones, or
different constructions altogether, and we leave those open.

\begin{problem}
Does a proper Euler magic matrix of order $7$ exist? More generally, which odd
orders greater than $5$ that are not powers of three admit proper examples?
\end{problem}

\appendix

\section{Factor tuples for orders 729 and 2187}\label{app:tuples}

The tuples below are those referred to in Proposition~\ref{prop:computational}.
Each $A_i$ satisfies $A_iA_i^{\tr}=\gamma_i I$ with the constant listed, and in
each tuple the $9^k$ products of factor entry squares are pairwise distinct.

\subsection*{Order $729=3^6$}
Constants $\gamma_i = 3249,\ 9025,\ 21609,\ 53361,\ 159201,\ 257049$, with
product $\gamma_{729}=1\,383\,621\,370\,577\,137\,212\,748\,452\,225$.
\[
\begin{gathered}
\begin{pmatrix}28&47&16\\44&-32&17\\23&4&-52\end{pmatrix}\!,\
\begin{pmatrix}50&78&21\\75&-54&22\\30&5&-90\end{pmatrix}\!,\
\begin{pmatrix}79&122&22\\118&-82&31\\38&1&-142\end{pmatrix}\!,\\[4pt]
\begin{pmatrix}163&146&74\\134&-179&58\\94&2&-211\end{pmatrix}\!,\
\begin{pmatrix}251&262&166\\242&-299&106\\194&34&-347\end{pmatrix}\!,\
\begin{pmatrix}103&446&218\\302&-233&334\\394&62&-313\end{pmatrix}\!.
\end{gathered}
\]

\subsection*{Order $2187=3^7$}
Constants $\gamma_i = 3249,\ 9025,\ 21609,\ 53361,\ 159201,\ 488601,\ 522729$,
with product
$\gamma_{2187}=1\,374\,777\,097\,726\,234\,503\,832\,371\,133\,749\,225$.
The first five factors are those of the order-$729$ tuple above; the last two
are
\[
\begin{pmatrix}-89&662&206\\298&-151&614\\626&166&-263\end{pmatrix}\!,
\qquad
\begin{pmatrix}-127&622&346\\466&-193&518\\538&314&-367\end{pmatrix}\!.
\]

Note that the order-$2187$ tuple is not an extension of the order-$243$ tuple of
Table~\ref{tab:tower}. Their first four \emph{constants} agree
($3249,9025,21609,53361$), and the first, third and fourth factors coincide as
matrices, but the second factors differ: the order-$243$ tuple has
$\left(\begin{smallmatrix}75&54&22\\50&-78&21\\30&-5&-90\end{smallmatrix}\right)$
where the order-$2187$ tuple has
$\left(\begin{smallmatrix}50&78&21\\75&-54&22\\30&5&-90\end{smallmatrix}\right)$.
The two are equivalent modulo the symmetries listed after
Lemma~\ref{lem:quaternion}, which is why the constants agree; the
distinct-product condition is a condition on entry squares and is unaffected.

\end{document}